\newtheorem{theorem}{Theorem}
\newtheorem{lemma}{Lemma}
\newtheorem{remark}{Remark}
\newtheorem{proposition}{Proposition}
\newtheorem{definition}{Definition}
\newtheorem{assumption}{Assumption}
\newcommand{\N}{\mathbb{N}}
\newcommand{\R}{\mathbb{R}}
\newcommand{\E}{{\mathbb{E}}}
\newcommand{\Pb}{{\mathbb{P}}}
\newcommand{\prt}[1]{\left(#1\right)}        
\newcommand{\brac}[1]{\left[#1\right]}       
\newcommand{\croc}[1]{\left\{#1\right\}}     
\newcommand{\norm}[1]{\left\|#1\right\|}     
\newcommand{\abs}[1]{\left|#1\right|}        
\newcommand{\eps}{\varepsilon}               
\newcommand{\I}[1]{\mathbb{I}_{\{#1\}}}
\newcommand{\btermn}{\Delta_{{\rm diff},n}}               
\newcommand{\btermi}{\Delta_{{\rm diff},i}}               
\newcommand{\ntermn}{\Delta_{{\rm mart},n}}               
\newcommand{\ntermi}{\Delta_{{\rm mart},i}}               
\newcommand{\mtermn}{\Delta_{{\rm mix},n}}
\newcommand{\mtermi}{\Delta_{{\rm mix},i}}
\newcommand{\tsimul}{\tau}   
\newcommand{\stepsize}{\gamma}   
\newcommand{\costOg}{G}   
\newcommand{\cost}{F}   
\newcommand{\pen}{b}   
\newif\ifSOC
     \newcommand{\review}[1]{{\color{black}#1}}
    \newcommand{\review}[1]{{\color{black}#1}}
\begin{document}

\title{Non-Stationary Gradient Descent for Optimal Auto-Scaling in Serverless Platforms}


\author{Jonatha~Anselmi, Bruno Gaujal, Louis-S\'ebastien Rebuffi
\thanks{J. Anselmi, B. Gaujal and L.-S. Rebuffi are with
Univ. Grenoble Alpes, CNRS, Inria, Grenoble INP, LIG, 38000 Grenoble, France.
Email: firstname.lastname@inria.fr
}
}

\markboth{Journal of \LaTeX\ Class Files,~Vol.~14, No.~8, August~2021}%
{Shell \MakeLowercase{\textit{et al.}}: A Sample Article Using IEEEtran.cls for IEEE Journals}


\maketitle


\begin{abstract}
To efficiently manage serverless computing platforms, a key aspect is the auto-scaling of services, i.e., the set of computational resources allocated to a service adapts over time as a function of the traffic demand. The objective is to find a compromise between user-perceived performance and energy consumption.
In this paper, we consider the \emph{scale-per-request} auto-scaling pattern and investigate how many function instances (or servers) should be spawned each time an \emph{unfortunate} job arrives, i.e., a job that finds all servers busy upon its arrival.
We address this problem by following a stochastic optimization approach:
we develop a stochastic gradient descent scheme of the Kiefer--Wolfowitz type that applies \emph{over a single run of the state evolution}.
At each iteration, the proposed scheme computes an estimate of the number of servers to spawn each time an unfortunate job arrives to minimize some cost function. Under natural assumptions, we show that the sequence of estimates produced by our scheme is asymptotically optimal almost surely. In addition, we prove that its convergence rate  is $O(n^{-2/3})$ where $n$ is the number of iterations.

From a mathematical point of view, the stochastic optimization framework induced by auto-scaling exhibits non-standard aspects that we approach from a general point of view.
We consider the  setting where a  controller can only get samples of the \emph{transient} -- rather than stationary -- behavior of the underlying stochastic system. To handle this difficulty, we develop arguments that exploit properties of the mixing time of the underlying Markov chain. By means of numerical simulations, we validate the proposed approach and quantify its gain with respect to  common existing scale-up rules.

\end{abstract}

\begin{IEEEkeywords}
Auto-scaling, serverless computing, parallel queueing system, stochastic optimization, Kiefer--Wolfowitz.
\end{IEEEkeywords}

\section{Introduction}

\subsection{Auto-scaling in Serverless Computing}

Auto-scaling mechanisms are considered to be essential components of serverless computing systems as they efficiently support cloud providers in handling the largest possible user base on their physical platforms.
These mechanisms are designed to automatically adjust the current service capacity in response to the current load while ensuring that service level agreement (SLA) contracts are respected.
In this paper, we tweak a popular auto-scaling paradigm that in the cloud computing literature is known as ``per-request''~\cite{scaleperrequest,ASLANPOUR2024266,Anselmi24} or ``reactive''~\cite{DOGANI2024104837} auto-scaling. According to this paradigm, an incoming request (or \emph{job}) is processed by an active idle function instance (or \emph{server}) if there is any available,
otherwise, the platform spawns a new server that will serve the job immediately after a \emph{coldstart} latency.
By design, the activation of a new server is only triggered at the arrival time of an \emph{unfortunate} job, i.e., a job that finds no active idle servers upon its arrival and thus must wait.
In practice, this is the de-facto auto-scaling pattern and is currently employed by serverless computing platforms such as AWS Lambda, Google Cloud Functions, Azure Functions, IBM Cloud Functions and Apache OpenWhisk.

\subsection{Addressed Problem}

The current implementations of the auto-scaling paradigm described above operate under the assumption that \emph{exactly one} server is spawned (or initialized)
when an unfortunate job arrives~\cite{scaleperrequest}.
The objective of this work is \emph{to investigate whether or not it would be convenient to activate more than one server}, say $1+\theta$, instead of just one.
It is worth noting that activating $\theta>0$ extra servers results in increased energy consumption compared to the case where $\theta=0$. On the other hand, this brings a performance benefit \emph{proactively}, as future arrivals have a higher chance of finding active idle servers, thus avoiding the coldstart latency cost.
This is particularly crucial for serverless or edge computing applications, where response time is critical to ensure optimal performance~\cite{DOGANI2024104837}. It is commonly recognized that even a minor rise of even a few milliseconds in latency can have a drastic effect on real-time applications such as e-commerce sales.
Therefore, this paper aims to explore whether the activation of surplus servers at scale-up times ultimately leads to a more favourable balance between energy consumption and user-perceived performance.

\subsection{Stochastic Optimization Framework}

Several analytical performance models have been developed in the literature to evaluate the delay performance and power consumption induced by auto-scaling algorithms; see, e.g., \cite{scaleperrequest,Jonckheere18,Gandhi13}. The starting point of our work is the Markov model proposed in~\cite{scaleperrequest}, which captures the unique details of several existing serverless computing platforms and is also tailored to the auto-scaling algorithm implemented in Amazon's AWS Lambda.
We extend this model to the case where the platform spawns $\theta+1$ servers at the moment of a job arrival if the job finds no active idle server -- the model in \cite{scaleperrequest} is recovered when $\theta=0$.

To find the~$\theta$ that minimizes a cost function that takes into account the blocking probability, i.e., the probability that a random job incurs a coldstart latency, and the energy consumption, we follow a stochastic optimization approach looking for an online learning algorithm.
The main motivation for this approach is that some quantities such as the job arrival rate are time-varying and unknown in advance. However, they are observable and can therefore be learned.
Moreover, the specific structure of the cost function induced by the considered auto-scaling setting brings the additional difficulty that the \emph{gradient} of the cost function is unknown as well, although again observable.
This issue prevents us from relying on the class of Stochastic Gradient Descent (SGD) iterative schemes, which are common in stochastic optimization, and leads us to consider iterative schemes of the Kiefer-Wolfowitz type~\cite{borkar2008stochastic}; see below for further details.

To optimize over $\theta$, an alternative approach would consist in modeling the problem via a Markov decision process (MDP) \cite{Puterman94} and then learning the optimal policy via (variations of) algorithms such as the celebrated UCRL2~\cite{UCRL2}.
In the literature, MDP formulations for problems similar to ours have been developed recently in \cite{IEEE_TR_CS_2024,TournaireHyon2023}; see also the references therein.
In contrast to our approach, these works assume full knowledge of the model parameters and no learning is considered.
%
We do not follow the MDP reinforcement learning approach for two reasons.
In general, the optimal policy is highly dependent on the system state and therefore not versatile.
In our case, however, the optimal policy reduces to a one-dimensional parameter $\theta$, indicating how many servers to activate upon job arrivals. Also,  the size of the state space of the  MDP is prohibitively large  in our case, and this would make any model-based reinforcement learning algorithm impractical. We show in Section~\ref{sec:framework}, that the size of the state space is of the order of~$N^3$ where~$N$ denotes the nominal number of servers that can be up and running, which for several applications is in the order of hundreds or thousands.

{
We also notice that the application of reinforcement learning for autoscaling of serverless applications is currently a bit underexplored~\cite{Benedetti22,Buyya24}.
A Q-learning approach is considered in \cite{Buyya21} and a comprehensive numerical evaluation of existing deep learning algorithms has been recently conducted in~\cite{Buyya24}.
The downside of these works is that no theoretical properties about convergence and optimality are proven.}

Finally, to minimize over $\theta$, a further approach would consist of i) solving the global balance equations of the underlying Markov chain to get the stationary measure induced by a given $\theta$ and then ii) computing the optimal $\theta$ by binary search or relying on standard algorithms for deterministic optimization.
This naive  approach is not interesting either because it requires the knowledge of all the parameters that define the underlying Markov chain. As discussed above, we do not assume this knowledge.

\subsection{Novelty of our Approach: Non-Stationary Samples}
\label{ssec:nonstationary}

When trying to apply existing stochastic optimization techniques to the specific case of auto-scaling, the following technical difficulty appears.
To fully grasp the root of the problem, let us formally introduce the general stochastic optimization framework under investigation.
The objective is to minimize some real function $f(\theta) := \E[F(\theta,X)]$ over $\theta \in \mathbb R^p$, where $X$ is some random variable over $\mathcal{X}$.
The distribution of $X$ and the mapping $F:\mathbb{R}^p \times \mathcal{X}\mapsto \mathbb{R}$ are unknown.
However, it is allowed to get samples $X_1,X_2,\dots$ and, thus, $F(\theta_1;X_1),F(\theta_2;X_2),\ldots$ for different values of $\theta_n$. Here, $F(\theta_n, X_n)$ represents the random cost observed at time step $n$ under the set of parameters $\theta$. To find an optimal $\theta$, one can only rely on such information.
Under certain technical conditions, the Robbins--Monro and Kiefer-Wolfowitz algorithms are the classical iterative schemes that make the sequence $\theta_n$ converge to a minimum of~$f$~\cite{borkar2008stochastic,Rasonyi_Tikosi_2023}; \review{see also~\cite{Walton22}}.
Unfortunately, this type of approach can not be employed within our setting because our problem does not grant access to samples of~$X$. In our case,~$X$ has the stationary distribution of a continuous-time Markov chain that models the dynamics of auto-scaling, and what we can only observe are (non-stationary) samples from the \emph{transient} behavior of such chain;
in practice, this corresponds to collecting observations from the up-and-running real system.
This non-stationarity is the main technical difficulty that singles out our work from existing approaches; for further details, see Section~\ref{ssec:problem} and Remark~\ref{rem:rem1}.
\review{
To deal with this difficulty, we modify the standard Kiefer-Wolfowitz algorithm by introducing a new parameter  that controls how long the system is observed for a given $\theta$ in order to obtain a non-stationary sample that is sufficiently close to the corresponding stationary distribution of the Markov chain parameterized by that given~$\theta$.
While we can prove that the modified algorithm converges a.s. to the optimal value of $\theta$ (Theorem \ref{th:as})  with a state-of-the-art convergence rate in $O(n^{2/3})$ (Theorem \ref{th:rate}), there is still a price to pay for non-stationarity:
\begin{itemize}
\item {\it Additional assumptions: }
Assumption~\ref{as:mixing}, which requires the underlying Markov chain to mix \emph{uniformly}, is critical to our proof technique.
It provides a means to control the accuracy of non-stationary samples and is, to some extent, necessary, as we demonstrate that without it, the desired convergence properties fail to hold. This is supported by numerical evidence.
\item {\it Technical difficulty:} The proof for the convergence rate requires a truncation/extension of the control policy to ensure smoothness of the stationary policy with respect to $\theta$. 
\item  {\it Increased convergence time:}  The convergence rate involves a  term depending on the mixing time, more precisely $\log(1/\rho)$ where $\rho$ is the uniform mixing rate.
\end{itemize}
}

{The closest reference to our work is the classical work~\cite{Pflug1990}, which presents a scheme of the Kiefer-Wolfowitz type as in our setting. Under technical conditions, that scheme converges almost surely to a minimum of the cost function. However, no convergence rate is proven for that scheme.
Another reference that is close to ours is \cite{Borkar22ssy}. Here, the authors consider a general iteration scheme for solving a stochastic optimization problem as in our setting, modulo some minor technical assumptions.
The main differences with respect to our work are that their iterative scheme is of the Robbins-Monroe type and that their main result (Theorem~1) does not specify the convergence speed of the scheme towards the minimum of the cost function. More precisely, they show that it has a polynomial structure, but the exponent depends on a parameter, $\alpha$, related to the Lipschitz constant of the average cost, which may be difficult to get depending on the application considered.}

We also mention a number of related works that have recently appeared in the literature to address settings similar to ours~\cite{MCGD,M1,M2,M3,M4}, i.e., where samples of~$X$ are not available.
These propose SGD methods where approximate samples of~$X$ are taken on the trajectory of a Markov chain, as we do within our approach.
In particular, the algorithm proposed in~\cite{MCGD} has nice convergence properties even in the case of non-convex cost functions and non-reversible Markov chains, which is the setting considered in this paper. The crucial difference between all these works and ours is that they all assume the knowledge of the gradient of the cost function~$f$.
In our case, this information is not available as it depends on the unknown transition rate matrix of the underlying Markov chain.

\subsection{Summary of our Contribution}

First, we model the job and server dynamics induced by our auto-scaling mechanism in terms of a Markov chain that generalizes the one recently proposed in~\cite{scaleperrequest}.
This is parameterized by $\theta$, which defines a set of auto-scaling policies and may be interpreted as a \emph{reserve} of extra servers that are ready to be used --
the model and the algorithm discussed in~\cite{scaleperrequest} are recovered if $\theta=0$.
The parameter $\theta$ is under the control of the system manager and, following the stochastic optimization approach discussed above, the aim is to design an iterative scheme capable of making $\theta$ to converge to $\theta^*$, i.e., the reserve size that minimizes some cost function.
lthough our optimization framework is inspired by auto-scaling, we look at the problem from a larger perspective than auto-scaling and propose a general iterative scheme, see Algorithm~\ref{algo:kw}, which is an adaptation of the celebrated Kiefer-Wolfowitz scheme.
Under natural conditions, in Theorem~\ref{th:as}, we show that the sequence of scalars generated by the proposed algorithm converges almost surely to a minimum of the cost function of interest, and in Theorem~\ref{th2}, we show that the convergence rate is $O(n^{-2/3})$, where $n$ denotes the number of iterations on $\theta$.
%
Finally, we apply the proposed algorithm to the special case of auto-scaling. By means of numerical simulations, we validate the proposed approach and quantify the cost function relative gains with respect to the common scale-up rule where~$\theta=0$.
Within a realistic parametrization of our problem, we show that $\theta^* \approx 6$ and that the proposed algorithm indeed generates a sequence $(\theta_n)_n$ that converges to such a value, yielding relative gains of around 5-8\%.

\subsection{Organization}

This paper is organized as follows.
Section~\ref{sec:framework} introduces a Markov model for the considered auto-scaling system and formalizes the stochastic optimization problem of interest.
Section~\ref{sec:KW} defines our non-stationary Kiefer--Wolfowitz algorithm (Algorithm~\ref{algo:kw}) and presents our main results (Theorems~\ref{th:as} and~\ref{th2}). We stress that this algorithm is general and that it can be applied outside the auto-scaling framework introduced in Section~\ref{sec:framework}.
Finally, Section~\ref{sec:application} is dedicated to the application of Algorithm~\ref{algo:kw} in the context of auto-scaling.
Here, we validate its behavior and evaluate its performance numerically.


\section{Framework}
\label{sec:framework}

\subsection{System Description and Auto-scaling Algorithm}

We consider an architecture composed of $N$ parallel servers; in serverless computing, servers are also called function instances. These represent the nominal service capacity, i.e., the upper limit on the number of servers that one user of the platform can have up and running at the same time.
To ensure service availability for other users, existing serverless platforms require the specification of such limit \cite{scaleperrequest}.

Each server can be in one of the following three macro states: \emph{warm} if turned on, \emph{cold} if turned off, and \emph{initializing} if making the transition from cold to warm.
An initializing server cannot process jobs yet as it performs basic startup operations such as connecting to database, loading libraries, etc.
We also say that a server is \emph{idle-on} if it is warm but not processing any job, and \emph{busy} if it is warm and processing some job.
For our purposes, it is convenient to split the set of initializing servers into two groups, say \emph{init}$_0$ and \emph{init}$_1$.
Init-1 servers are initializing servers that are already bound to a job, i.e., the job that triggered their activation. Upon finishing their initialization phase, they process their associated job immediately.
Init-0 servers are initializing servers that are not necessarily bound to any job. Upon finishing their initialization phase, they become either idle-on or busy depending on whether a  job is blocked in the queue.
Warm servers can make the transition to cold only if they are idle-on.
Figure~\ref{fig:serverstates} summarizes the possible transitions among the server states.
\begin{figure}[h]
 \centering
 \includegraphics[width=\columnwidth]{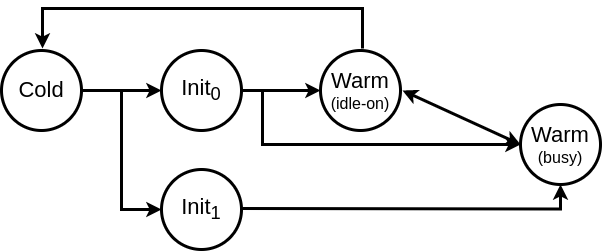}
 \caption{State transitions for each server.}
\label{fig:serverstates}
\end{figure}

Jobs join the system from an exogenous source to receive service.
Upon each job arrival:
\begin{itemize}
 \item if an idle-on server exists, then the job is sent and processed by an idle-on server selected uniformly at random;

 \item if all servers are either busy or init$_1$, all resources are saturated and the job is rejected;

 \item otherwise, a certain number of cold servers
 is selected uniformly at random to become initializing, and in the meanwhile the job waits in a central queue for the activation of its init$_1$ server.
 that is again selected
 upon arrival of the job itself.

\end{itemize}

In the central queue, jobs wait following the first-come first-served scheduling discipline, and each job leaves the system upon completion at its designated server.
We notice that scale-up decisions are taken at job arrival times and by a central monitor, which at any point in time has full knowledge of the server states.
In the literature, this auto-scaling mechanism is called ``scale-per-request''~\cite{scaleperrequest,ASLANPOUR2024266}.
On the other hand, the decision of making a server cold is taken by the server itself after an \emph{expiration time} if during such time the server received no job.
This scale-down rule is well consolidated in practice~\cite{Peeking18,SPEC18,Gandhi13} and it will be assumed in the following.


Since the scale-down rule is fixed, we focus on the design of a scale-up rule.
As commented in the Introduction, current scale-up rules initialize at most one server for each job arrival.
The novelty of our approach consists in providing more flexibility in this respect, and this flexibility is captured by controlling the number of init$_0$ servers.
More precisely, Algorithm~\ref{algo:autoscaling} defines our auto-scaling algorithm.

\begin{algorithm}
\label{algo:autoscaling}
\caption{The proposed auto-scaling algorithm.}
\KwIn{$\theta\in \mathbb{R}_+$, ``system state''}
\KwOut{Number of servers to initialize: \#init$_0$\_servers:=0 and  \#init$_1$\_servers:=0}
\For{each job arrival}{
    \#init$_0$\_servers:=0\\
    \#init$_1$\_servers:=0\\
    \uIf{\#{idle-on}\_servers=0 and \#{cold}\_servers>0}
    {
    \#init$_1$\_servers:=1\\
    \#init$_0$\_servers:=$\pi_\theta$(``system\_state'')\\
    }
 }
\end{algorithm}

Let us elaborate a bit more on Algorithm~\ref{algo:autoscaling}.
First, servers are only initialized if there is no idle-on server.
In this case, the number of init$_1$ servers to activate is always set to one, because by definition this init$_1$ server will be the server that will process the incoming job.
Then, the number of init$_0$ servers to activate is given by the ``black box'' $\Pi_\theta$.
The form of this function is not important for now and will be given in Section~\ref{sec:numerical}.
What should be retained is that it depends on the system state and on the parameter $\theta\in\mathbb{R}_+$, i.e., the design parameter that will be the subject of stochastic optimization.
We anticipate that the idea behind our scale-up rule $\pi_\theta$ will be to choose \#init$_0$\_servers to ensure that an average reserve of $\theta$ servers is ready for use at least in the short future.
Within this interpretation, if $\theta=0$ then no init$_0$ servers exist and Algorithm~\ref{algo:autoscaling} boils down to the auto-scaling algorithm investigated in~\cite{scaleperrequest}.

\subsection{Markov Model}
\label{ssec:Markov_model}

We introduce a continuous-time Markov chain that models the dynamics induced by the
system described above.

We assume that jobs join the system following a Poisson process with rate $\lambda$.
Service, initialization and expiration times are exponentially distributed random variables with rate $\mu$, $\beta$, $\gamma$, respectively.
The four sequences of job inter-arrival, service, initialization and expiration times are i.i.d. and independent of each other.

Let $\mathcal{X}:= \{x = (x_1,x_2,x_3,x_4)\in\mathbb{N}^4: \sum_{i=1}^3x_i\le N, x_4\le x_3\}$.
A Markovian representation of the system dynamics is captured by the state variable $ x \in \mathcal X$
where $x_1$, $x_2$ and $x_3$ represent the number of idle-on, busy and initializing (both init$_0$ and init$_1$) servers, respectively, and $x_4$ represents the number of init$_1$ servers only.
Note that the number of init$_1$ servers can be equivalently interpreted as the number of \emph{blocked} jobs, i.e., the jobs that are waiting in the central queue.
Thus, the number of cold servers in state $x$ is $N- \sum_{i=1}^3 x_i$.

Let $e_i$ be the size-$4$ unit vector in direction $i$.

The Markov chain $(X_t)_t$ that describes the dynamics of the proposed auto-scaling algorithm is defined by the transition matrix
\begin{align}
\label{eq:Q_def}
Q_{x,x'} =
\left\{
\begin{array}{ll}
\lambda \I{x_1>0} &  \mbox{ if } x'=x - e_1 + e_2 \\
\lambda \I{x_1=0} &  \mbox{ if } x'=x + \pi_\theta(x) e_3 + e_4 \\
\mu x_2 &  \mbox{ if } x'=x + (e_1 - e_2)\I{x_4=0} \\ & \qquad\qquad\,\, - \,e_4\, \I{x_4>0} \\
\gamma x_1 & \mbox{ if } x'=x - e_1 \\
\beta x_3 \I{x_4>0} & \mbox{ if } x'=x + e_2-e_3-e_4 \\
\beta x_3 \I{x_4=0} & \mbox{ if } x'=x + e_1-e_3 \\
\end{array}
\right.
\end{align}
for all states $x,x'\in\mathcal{X}$, with $x'\neq x$,
where
$\pi_\theta(x)$ represents the scale-up policy, i.e., the number of servers to initialize upon job arrival and when the system is in state $x$ with $x_1=0$ and $N- \sum_{i=1}^3 x_i>0$.
Again, the precise form of $\pi_\theta(x)$ will be given in Section~\ref{sec:numerical}.

\subsection{Stochastic Optimization Problem}
\label{ssec:problem}


Our objective is to find $\theta$ that minimizes a trade-off between user-perceived performance and energy consumption,
and as discussed in the Introduction, we follow a stochastic optimization approach.
To investigate this trade-off, we first define the instant random cost:
\begin{equation}
 \label{eq:cost}
 C(x) := \sum_{i=1}^4 w_i x_i + \I{ x_2+x_4=N} w_{\rm rej},
\end{equation}
where $w_i$, for $i=1,\ldots,4$, and $w_{\rm rej}$ are positive weights;
here, for instance, $w_{\rm rej}$ is the weight for rejecting a job because $\I{ x_2+x_4=N}$ represents the event that the system contains $N$ jobs or, equivalently, that all the $N$ servers are used.
This cost function is  known to the optimizer. 
However,  we are interested in finding $\theta$ that minimizes the average long-run cost in a setting where \emph{the transition rate matrix $Q$ in \eqref{eq:Q_def} is unknown but the states of the underlying Markov chain,  $(X_t)_t$, can be observed} by the optimizer.
More precisely, we want to develop an iterative scheme able to find $\theta\in\mathbb{R}_+$ that minimizes
\begin{align}
\label{eq:objective}
c(\theta) := \E[C(X_\infty^\theta))]
\end{align}
where $X_\infty^\theta$ is a random variable having the invariant distribution of $(X_t)_t$, which exists because it is ergodic and depends on $\theta$ because of~\eqref{eq:Q_def}.
{In addition, we require that the iterative scheme that we look for should be potentially implemented in a real system. To achieve this goal, the optimizer can only use samples from a {\it single run} of the Markov chain and adjust the value of $\theta$ on the fly.}

\begin{remark}
\label{rem:rem1}
The structures of \eqref{eq:cost} and the cost function described in Section~\ref{ssec:nonstationary} are different in the sense that~\eqref{eq:cost} does not depend on $\theta$ \emph{directly}.
It only depends on~$\theta$ via $X_\infty^\theta$.
{This poses the technical difficulty that $\frac{{\rm d}}{{\rm d}\theta}C$ is not accessible and rules out the application of a large class of stochastic gradient descent algorithms based on stochastic approximations with Markovian noise (see for example \cite{allmeier_gast2024,Borkar22ssy}).}
\end{remark}

\begin{remark}
If one could sample directly from $X_\infty^\theta$, then the optimization problem under investigation could be solved via the celebrated Kiefer--Wolfowitz algorithm which, under certain conditions, can provide a sequence of $\theta$'s converging to a minimum of $c(\theta)$.
However, as discussed in the introduction, we have only access to (transient) samples of $(X_t)_t$ rather than $X_\infty^\theta$ and this makes our problem more difficult.
\end{remark}

\section{A Non-stationary Kiefer--Wolfowitz Algorithm}
\label{sec:KW}

In this section, we propose a general stochastic descent algorithm that solves a general optimization problem.
The relationship with auto-scaling as formulated previously is quite intuitive but it will  be formally  explained  in Section \ref{sec:application}.
Here, we consider a quite general parametric finite Markovian system, with parameter denoted $\theta \in \R$.
The finite state space is denoted by $\mathcal{X}$ and the transition matrix under $\theta$ is $P_\theta$.
The aim is to design an online learning algorithm that computes the optimal value  $\theta^*$, which  minimizes an expected  cost  under the  {\it stationary} regime of the Markov chain.

In the following, we denote by $f$ the function to minimize, defined as
\begin{align*}
  f: \R &\to \R\\
  \theta &\mapsto \E\brac{\cost(\theta,X_\infty^\theta)}
\end{align*}
where $F(\theta,x)$ is the cost under state $x$ and parameter $\theta$. Here, $X_\infty^\theta$ is a  random state whose distribution is  $m_\theta$, the stationary distribution of the Markov chain with parameter $\theta$.

The rest of this section is organized as follows.
Section~\ref{ssec:algo} introduces our online algorithm;
Section~\ref{ssec:results} shows that under some regularity assumptions on the cost, our algorithm  converges to the optimal parameter $\theta^*$ almost surely (Theorem \ref{th:as}). In addition, a state-of-the-art convergence rate in  $O(n^{-2/3})$  is proved in Theorem \ref{th:rate}.

\subsection{Description of the Algorithm}
\label{ssec:algo}

Our non-stationary Kiefer--Wolfowitz (KW) algorithm is based on the following idea.
{After each policy update, from episode $n-1$ to $n$, the classical KW algorithm needs to sample two independent states  from the stationary measures of the Markov chain with parameters $\theta_{n}+\delta_{n}$ and $\theta_{n}-\delta_{n}$, where $\delta_n$ is the stepsize that is needed to approximate the gradient of the cost function at $\theta_n$. However, we do not have access to these stationary measures. Instead, we simulate one  Markov chain over $\tsimul_n$ timesteps twice to reach two  states close to stationarity, where $\tsimul_n$ is related to the mixing time of the Markov chain and scales in $\log n$.}

We start from the initial state $x_{\rm start}$ and initial policy $\theta_0$. Denote by $T_n$ the number of timesteps at the end of episode $n-1$. For each episode $n$, we also choose $x_{\rm start}$ to be the initial state of the trajectories we will simulate.

From $T_n$ to $T_n+\tsimul_n-1$, we first simulate the Markov chain with initial state $x_{\rm start}$ and parameter $\theta_n+\delta_n$, and observe the states $X_{T_n,T_n+i}^{\theta_n+\delta_n}$ for $i=1,\ldots,\tsimul_n$, and the random cost $\cost(\theta_n+\delta_n,X_{T_n,T_n+\tsimul_n}^{\theta_n+\delta_n})$. We reiterate this process $K$ times, and do similar simulations for the Markov chain with parameter $\theta_n-\delta_n$. 
 We then evaluate the average cost of the Markov chain with parameters $\theta_n+\delta_n$ and $\theta_n-\delta_n$ with the samples $X_{T_n+i\tsimul_n,T_n+(i+1)\tsimul_n}^{\theta_n+\delta_n}$ and $X_{T_n+(K+i)\tsimul_n,T_n+(K+i+1)\tsimul_n}^{\theta_n-\delta_n}$ for $i=0,\ldots,K-1$, from which we may approximate the derivative of the cost function at $\theta_n$, and eventually compute the following update of the parameter $\theta$:
\begin{equation}
 \label{eq:update}
 \theta_{n+1}=\theta_n- \stepsize_n \frac{\hat f_n(\theta_n+\delta_n)-\hat f_n(\theta_n-\delta_n)}{2\delta_n},
\end{equation}
where $\stepsize_n$ is the stepsize of the parameter update, and
\begin{footnotesize}
\begin{subequations}
\label{eq:grad_estimates}
\begin{align}
 \hat f_n(\theta_n+\delta_n)&=\frac{1}{K}\sum_{i=0}^{K-1}\cost\prt{\theta_n+\delta_n,X_{T_n+i\tsimul_n,T_n+(i+1)\tsimul_n}^{\theta_n+\delta_n}}, \\
 \hat f_n(\theta_n-\delta_n)&=\frac{1}{K}\sum_{i=0}^{K-1}\cost\prt{\theta_n-\delta_n,X_{T_n+(K+i)\tsimul_n,T_n+(K+i+1)\tsimul_n}^{\theta_n-\delta_n}}.
 \end{align}
\end{subequations}
\end{footnotesize}
This process is formalized in Algorithm~\ref{algo:kw}.

\begin{algorithm}
\label{algo:kw}
  \caption{Non-stationary Gradient Descent Algorithm.}
  \KwIn{$\stepsize_n$ step-size sequence, $\delta_n$ discretization step, initial parameter $\theta_0$ and state $x_{\rm start}$, $T$ the total simulation time and a parameter $\tsimul$.}
  Set $n=0$ the algorithm time-step and $t=0$ the current simulation time-step, $x=x_{\rm start}$ the initial state.\\

  \While{$t\leq T$ }{
  $\tsimul_n=\tsimul \log (n+1)$ \\
    \For{ the simulation number $i=0,1,\ldots,K-1$}{
    Simulate the Markov chain starting at $x_{\rm start}$ with parameter $\theta_n+\delta_n$ over $\tsimul_n$ steps, by choosing at each step of the simulation the action $\hat \theta$ randomly between $\lfloor \theta_n+\delta_n \rfloor$ and $\lfloor \theta_n+\delta_n \rfloor+1$.\\
    Repeat this process overall $K$ times, and observe the empirical average reward and the average visit count for each state.\\
    Repeat this process for the parameter $\theta_n-\delta_n$.\\
    }
  Compute the average over the $K$ simulations to obtain $\hat{f}_n (\theta_n+\delta_n)$ and $\hat{f}_n(\theta_n-\delta_n)$.\\
  Update the empirical stationary measure: the number of visits of a given state under any parameter, divided by the total number of visits.\\
  Compute the parameter update \eqref{eq:update}:
  $\theta_{n+1}=\theta_n-\stepsize_n\frac{\hat {f} (\theta_n+\delta_n)-\hat{f}(\theta_n-\delta_n)}{2\delta_n}.$\\
  $t:=t+2K\tsimul_n$ and  $n:=n+1$
}
\end{algorithm}

{In the loop in Line 4, we simulate the trajectory induced by $\theta_n+\delta_n$ before the one induced by $\theta_n-\delta_n$. Because of the memoryless property, this does not affect our results. }

\subsection{Convergence Results}
\label{ssec:results}

We will use the following assumptions.

\begin{assumption}[Regularity of the cost function]
\label{as:reg}
\ \\
\vspace{-0.5cm}
\begin{enumerate}
 \item[(1.a)] $f:\R \to \R \in \mathcal C ^3$,
 \item[(1.b)] $f'$ is Lipschitz,
\item[(1.c)] the cost function $\cost$ can be written as $\cost(\theta,x)=\costOg(\theta,x)+\pen(\theta)$, where $\pen$ is a penalty function such that $\abs{\pen(\theta+\varepsilon)-\pen(\theta-\varepsilon) }\leq C_2(\theta-\theta^*)\varepsilon$ for $0<\varepsilon<\varepsilon_0$, where $C_2$ and $\varepsilon_0$ are positive constant, and $\costOg$ is positive and bounded by $\costOg_{\max}$ and $\theta^*$ is a minimum of $f$.
\item[(1.d)] There exists $L>0$ and $r>0$ such that
$f'(\theta) > r$ for $\theta>L$, and $f'(\theta) < -r$ for $\theta<-L$.
\end{enumerate}
\end{assumption}

{
While Assumption~\ref{as:reg} is quite technical and may look restrictive, it is satisfied in the autoscaling case by simple inspection of the cost function.
This will be shown in Section~\ref{sec:application}.}



\begin{assumption}[Uniform mixing time]
 \label{as:mixing}
Let $P_\theta$ be the transition matrix of a Markov chain indexed by the parameter $\theta\in\mathbb{R}$.
Let $x$ be an initial state and $m_\theta$ denote the corresponding stationary measure within parameter $\theta$.
There exists $C_1>0$ and $\rho<1$ independent of $\theta$ such that
$$\left\|P_\theta^t(x,\cdot)-m_\theta \right\|_1\leq C_1\rho^t$$
for any $\theta$ and $t$.
\end{assumption}
The previous assumption is not common in stochastic gradient descent algorithms because standard approaches assume to be able to sample from a stationary distribution. Indeed, in our framework it relates the transient regime that we can observe over time to stationary properties of the system.

\begin{assumption}[Uniqueness]
\label{as:uniqueness}
The function $f$ has a unique minimum $\theta^*$.
\end{assumption}

\begin{assumption}[Strong convexity]
\label{as:convex}
The function $f$ is strongly convex: for some $\kappa >0$, for any $\theta,\theta' \in \R^+$, it holds that
\begin{equation}
 \label{eq:convex}
 f'(\theta)(\theta-\theta') \geq \kappa (\theta-\theta')^2.
\end{equation}
\end{assumption}

We can now state our main results.

The next theorem states the almost sure convergence of the sequence $\theta_n$ produced by the proposed algorithm, under the following parametrization:
The sequences $(\stepsize_n)_n$, $(\delta_n)_n$ and $(\tsimul_n)_n$ are such that:
\begin{subequations}
\label{eq:parameter1}
\begin{align}
&\lim_{n\to\infty} \delta_n=0,
&\lim_{n\to \infty} \tsimul_n= +\infty\\
&\sum_{n} \stepsize_n = \infty,
&\sum_{n}  \stepsize_n^2 \delta_n^{-2}   < \infty
\end{align}
\end{subequations}
and
\begin{equation}
\label{eq:parameter2}
(\stepsize_n)_n, (\delta_n)_n \text{ and } \prt{\frac{\stepsize_n}{\delta_n}}_n \text{ are decreasing.}
\end{equation}

\begin{theorem}
 \label{th:as}
 Let $(\theta_n)_n$ be the sequence of random variables generated by Algorithm~\ref{algo:kw} with parametrization \eqref{eq:parameter1}-\eqref{eq:parameter2}.
 Under Assumptions~\ref{as:reg},~\ref{as:mixing}, and ~\ref{as:uniqueness}, we have
 \begin{equation}
  \theta_n \overset{a.s.}{\to} \theta^*.
 \end{equation}
\end{theorem}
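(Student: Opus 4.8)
## Proof Plan

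The plan is to cast the update \eqref{eq:update} as a stochastic approximation recursion of the form $\theta_{n+1} = \theta_n - \gamma_n\bigl(f'(\theta_n) + \xi_n + \beta_n\bigr)$, where $\xi_n$ is a (conditionally mean-zero) noise term and $\beta_n$ is a bias term, and then to invoke a Robbins--Monro-type almost-sure convergence argument. First I would decompose the finite-difference quotient $\frac{\hat f_n(\theta_n+\delta_n)-\hat f_n(\theta_n-\delta_n)}{2\delta_n}$ into three pieces relative to the conditioning $\sigma$-algebra $\mathcal F_n$ generated by the history up to the start of episode $n$: (i) the true gradient $f'(\theta_n)$; (ii) a deterministic discretization bias of order $O(\delta_n^2)$ coming from the Taylor expansion of the smooth part $f\in\mathcal C^3$ (Assumption~\ref{as:reg}(1.a)), together with the penalty-induced bias which by Assumption~\ref{as:reg}(1.c) is controlled by $C_2|\theta_n-\theta^*|$; (iii) a non-stationarity bias coming from the fact that $X^{\theta_n\pm\delta_n}_{\cdot}$ is only $\tsimul_n$-steps into the chain, not at stationarity, which by Assumption~\ref{as:mixing} is bounded by $\costOg_{\max} C_1 \rho^{\tsimul_n}/\delta_n$; and (iv) a martingale-difference term $\xi_n$ whose conditional variance, because $\costOg$ is bounded by $\costOg_{\max}$ and because we average over $K$ samples, is of order $O(1/(K\delta_n^2))$.

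Next I would check the summability conditions needed for the supermartingale convergence theorem (e.g.\ the version in \cite{borkar2008stochastic}). The martingale-noise contribution is controlled since $\sum_n \gamma_n^2\,\mathbb E[\xi_n^2\mid\mathcal F_n] \lesssim \sum_n \gamma_n^2\delta_n^{-2} < \infty$ by \eqref{eq:parameter1}. The Taylor/discretization bias contributes $\sum_n \gamma_n O(\delta_n^2)$; since $\gamma_n/\delta_n$ is decreasing and $\gamma_n\delta_n^{-2}$ is square-summable, one gets $\gamma_n\to 0$ and $\delta_n\to 0$ fast enough that this term is summable (or at least vanishing and absorbable). The mixing bias contributes $\sum_n \gamma_n \rho^{\tsimul_n}/\delta_n$; with $\tsimul_n = \tsimul\log(n+1)$ this is $\sum_n \gamma_n (n+1)^{\tsimul\log\rho}/\delta_n$, which is summable provided $\tsimul$ is chosen large enough relative to $\log(1/\rho)$ — here is exactly where Assumption~\ref{as:mixing} enters and why $\log(1/\rho)$ shows up in the rates. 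The penalty bias term is linear in $|\theta_n-\theta^*|$ with a small constant, so it can be merged into the "drift" part of the recursion without destroying the descent property, using that $\sum_n\gamma_n=\infty$. Then I would verify non-escape: Assumption~\ref{as:reg}(1.d) gives $f'(\theta)(\theta-\theta^*)>0$ for $|\theta|$ large, so the iterates cannot drift to $\pm\infty$ — a standard stability/Lyapunov argument with candidate function $V(\theta)=(\theta-\theta^*)^2$ shows $\sup_n|\theta_n|<\infty$ a.s.

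Finally, with stability in hand and the noise/bias terms shown to be asymptotically negligible in the Robbins--Monro sense, the iteration reduces to an asymptotic pseudo-trajectory of the ODE $\dot\theta = -f'(\theta)$, whose unique stable equilibrium is $\theta^*$ by Assumption~\ref{as:uniqueness} (combined with 1.d forcing convergence to that equilibrium rather than divergence). Applying the Robbins--Siegmund supermartingale lemma to $V(\theta_n)=(\theta_n-\theta^*)^2$ — using $\langle f'(\theta_n),\theta_n-\theta^*\rangle \ge 0$ with strict positivity away from $\theta^*$ (from 1.d and uniqueness) — yields $\theta_n\to\theta^*$ a.s. I expect the \textbf{main obstacle} to be the careful bookkeeping of the two competing biases against the noise: the discretization step $\delta_n$ must shrink slowly enough that $\sum\gamma_n^2\delta_n^{-2}<\infty$ yet fast enough (jointly with $\tsimul_n\to\infty$) that both the $O(\delta_n^2)$ bias and the $O(\rho^{\tsimul_n}/\delta_n)$ mixing bias are summable when weighted by $\gamma_n$; reconciling this with the monotonicity requirements \eqref{eq:parameter2} and, separately, handling the fact that the "cost" samples are drawn from a non-stationary chain so that even the conditional mean of $\hat f_n$ is only approximately $f$, is the delicate core of the argument. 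A secondary subtlety is that the algorithm actually randomizes the integer action between $\lfloor\theta_n\pm\delta_n\rfloor$ and $\lfloor\theta_n\pm\delta_n\rfloor+1$, so one must argue this randomized policy realizes the interpolated transition matrix $P_\theta$ for non-integer $\theta$ and that Assumption~\ref{as:mixing} applies uniformly over this interpolated family.
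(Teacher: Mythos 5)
Your plan is essentially the paper's proof: the same four-way decomposition of the finite-difference quotient into $f'(\theta_n)$, a Taylor bias of order $\delta_n^2$, a martingale term controlled via $\sum_n \stepsize_n^2\delta_n^{-2}<\infty$ and $L^2$-martingale convergence, and a non-stationarity term; the same stability argument from Assumption~\ref{as:reg}(1.d); and the same conclusion via the asymptotic-pseudotrajectory/ODE framework of Bena\"im with $f$ as Lyapunov function and Assumption~\ref{as:uniqueness}. The one genuine methodological difference is your treatment of the non-stationarity term: you bound it as a conditional bias of order $\costOg_{\max}C_1\rho^{\tsimul_n}/\delta_n$ and sum it against $\stepsize_n$, whereas the paper constructs an optimal coupling between the $\tsimul_n$-step sample and a stationary sample and invokes Borel--Cantelli to conclude the term is \emph{exactly zero} for all $n$ large enough, almost surely. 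Both work; your version is arguably more elementary and makes the role of $\tsimul$ versus $\log(1/\rho)$ more transparent, while the coupling argument avoids having to track this term at all in the APT verification. One caveat: your alternative closing step via Robbins--Siegmund applied to $(\theta_n-\theta^*)^2$ requires $f'(\theta)(\theta-\theta^*)\ge 0$ globally with strict positivity away from $\theta^*$, and this does \emph{not} follow from Assumptions~\ref{as:reg}(1.d) and~\ref{as:uniqueness} alone (it is essentially the strong convexity of Assumption~\ref{as:convex}, which is reserved for Theorem~\ref{th2}); under the hypotheses of Theorem~\ref{th:as} you must go through the APT/Lyapunov route, as you also propose and as the paper does.
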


The next theorem provides a result on the convergence rate to the minimizer of~$f$.

\begin{theorem}
  \label{th2}
  \label{th:rate}
Let Assumptions~\ref{as:reg},~\ref{as:mixing},~\ref{as:uniqueness} and~\ref{as:convex} hold.
Under well-chosen parameters $\delta_n=n^{-2/3}$, $\stepsize_n=n^{-1}$ and $T_n=\alpha\frac{\log n}{\log 1/\rho}$ with $\alpha>1$, and $\stepsize_0 < \frac{4 \kappa}{C_2}$, Algorithm~\ref{algo:kw} converges to the minimum $\theta^*$ with asymptotic rate:
\begin{multline*}
\limsup_{n \to \infty} \E\brac{(\theta_n-\theta^*)^2} n^{{2/3}}
\\
\leq \frac{\prt{2C_0+\sqrt{2}\costOg_{\max}C_1^{1/2}}^2}{8\kappa^2}+\frac{\costOg_{\max}^2}{2\kappa}.
\end{multline*}
\end{theorem}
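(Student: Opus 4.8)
\textbf{Proof plan.}
Write $e_n:=\theta_n-\theta^*$. From the update~\eqref{eq:update} we have $e_{n+1}=e_n-\stepsize_n G_n$ with $G_n:=\bigl(\hat f_n(\theta_n+\delta_n)-\hat f_n(\theta_n-\delta_n)\bigr)/(2\delta_n)$. The plan is to decompose $G_n$, conditionally on the $\sigma$-field $\mathcal F_n$ of everything prior to episode $n$, as $G_n=f'(\theta_n)+B_n+D_n+M_n$, where $M_n$ is a martingale difference ($\E[M_n\mid\mathcal F_n]=0$), $B_n$ is the centered finite-difference bias $\bigl(f(\theta_n+\delta_n)-f(\theta_n-\delta_n)\bigr)/(2\delta_n)-f'(\theta_n)$, and $D_n$ is the additional bias caused by the fact that the samples feeding $\hat f_n(\theta_n\pm\delta_n)$ are drawn from the transient law $P_\theta^{\tsimul_n}(x_{\rm start},\cdot)$ rather than from the stationary measure $m_\theta$. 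Assumption~\ref{as:reg}(1.a) ($f\in\mathcal C^3$) controls the $\costOg$-part of $B_n$ at order $\delta_n^2$, while the $\pen$-part is handled by Assumption~\ref{as:reg}(1.c) and contributes a term bounded by $\tfrac{C_2}{2}|e_n|$, which we keep separate in order to absorb it into the contraction. Since $\costOg$ is bounded by $\costOg_{\max}$, Assumption~\ref{as:mixing} yields $|D_n|=O\bigl(\costOg_{\max}C_1\rho^{\tsimul_n}/\delta_n\bigr)$, and the analogous mixing term inside the conditional second moment of $M_n$ enters, after Cauchy--Schwarz, at order $\costOg_{\max}C_1^{1/2}\rho^{\tsimul_n/2}/\delta_n$; the genuinely martingale part of $\E[M_n^2\mid\mathcal F_n]$ is $O(\costOg_{\max}^2/\delta_n^2)$ up to the gain from averaging the sub-runs.

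A preliminary technical point is to give meaning to $f$ and $f'$ at non-integer $\theta$: at each simulation step the algorithm plays an integer action obtained by randomized rounding of $\theta_n\pm\delta_n$, so the truly stationary object is the interpolated chain whose kernel is affine, in the fractional part of $\theta$, between the two neighbouring integer policies. One checks that Assumption~\ref{as:mixing} transfers to this extended chain, and then a standard perturbation bound for stationary distributions of uniformly ergodic chains shows that $\theta\mapsto m_\theta$ is Lipschitz — indeed $\mathcal C^3$, consistently with Assumption~\ref{as:reg}(1.a) — with constants depending only on $C_1,\rho$. This is the truncation/extension device announced in the introduction; it is precisely what legitimizes the $O(\delta_n^2)$ bound on $B_n$ and the bound on $D_n$. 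With the prescribed $\tsimul_n=\alpha\log n/\log(1/\rho)$ one has $\rho^{\tsimul_n}=n^{-\alpha}$, and since $\alpha>1$ the non-stationarity errors are of smaller order than the finite-difference bias and decay fast enough to enter the recursion below only through a finite accumulated contribution.

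Next, square the error recursion and take conditional expectations. Strong convexity (Assumption~\ref{as:convex}) gives $f'(\theta_n)e_n\ge\kappa e_n^2$; applying Young's inequality to the cross terms $2\stepsize_n e_n(B_n+D_n)$ and using $\E[e_nM_n\mid\mathcal F_n]=0$, one arrives at a scalar recursion $u_{n+1}\le\bigl(1-2\kappa\stepsize_n+\tfrac{C_2}{2}\stepsize_n+O(\stepsize_n^2)\bigr)u_n+R_n$, with $u_n:=\E[e_n^2]$ and $R_n$ collecting $\stepsize_n^2\,\E[M_n^2\mid\cdot]$ together with the squared bias terms weighted by $\stepsize_n$. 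The hypothesis $\stepsize_0<4\kappa/C_2$ ensures the effective contraction factor stays in $(0,1)$ for all $n$ (it is most binding at $n=0$ since $\stepsize_n$ is decreasing), so the penalty $\pen$ does not destabilize the iteration; boundedness of $(\theta_n)_n$ needed along the way follows from Assumption~\ref{as:reg}(1.d) as in the proof of Theorem~\ref{th:as}. Substituting $\stepsize_n=n^{-1}$, $\delta_n=n^{-2/3}$ and $\rho^{\tsimul_n}=n^{-\alpha}$, the dominant part of $R_n$ is of order $n^{-5/3}$, with leading constant built from $C_0$ (the $\mathcal C^3$ finite-difference coefficient), $\costOg_{\max}C_1^{1/2}$ (the mixing error), and $\costOg_{\max}^2$ (the martingale variance). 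A Chung-type deterministic lemma — if $u_{n+1}\le(1-a/n)u_n+b\,n^{-s-1}$ with $a>s$ then $\limsup_n n^{s}u_n\le b/(a-s)$, here with $s=2/3$ and $a$ essentially $2\kappa$ — then gives $\limsup_n n^{2/3}u_n\le\bigl(2C_0+\sqrt2\,\costOg_{\max}C_1^{1/2}\bigr)^2/(8\kappa^2)+\costOg_{\max}^2/(2\kappa)$ once the persistent-bias and noise contributions are separated and the Young constants optimized.

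The step I expect to be the main obstacle is the joint control of the gradient surrogate built from transient samples: establishing $\mathcal C^3$-regularity of the stationary cost in the continuous parameter through the policy-extension construction and the uniform-mixing perturbation estimate, and \emph{simultaneously} pinning down the conditional second moment of $M_n$ and the bias $D_n$ with the \emph{exact} constants (not merely $O(\cdot)$) needed to land on the stated right-hand side. In particular, one must verify that the calibration $\delta_n=n^{-2/3}$, $\tsimul_n\asymp\log n$, $\stepsize_n=n^{-1}$ balances finite-difference bias, non-stationarity bias and gradient noise so that $R_n$ is genuinely of the order required for the Chung lemma with exponent $2/3$. Everything else — the squared-error recursion, the strong-convexity drift, Young's inequality, and the deterministic recursion lemma — is routine once this input is secured.
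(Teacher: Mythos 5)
Your proposal follows essentially the same route as the paper's proof: the same four-way decomposition of the gradient surrogate into $f'(\theta_n)$, a finite-difference bias of order $\delta_n^2$, a non-stationarity bias controlled by an optimal coupling with the stationary sample under Assumption~\ref{as:mixing} (giving $\rho^{\tau_n}=n^{-\alpha}$), and a martingale difference; the same squared-error recursion driven by strong convexity with the penalty term isolated via Assumption~\ref{as:reg}(1.c) and the condition $\gamma_0<4\kappa/C_2$; and the same concluding deterministic recursion lemma (the paper cites the Kiefer--Wolfowitz lemma of Walton, which plays exactly the role of your Chung-type lemma, the only cosmetic difference being that the paper keeps the cross terms as $\xi_n^{1/2}$ via Cauchy--Schwarz rather than absorbing them by Young's inequality). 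One arithmetic point to fix: with $\delta_n=n^{-2/3}$ the variance contribution $\gamma_n^2/\delta_n^2$ is of order $n^{-2/3}$, not $n^{-5/3}$, so the balance you describe only works for $\delta_n=n^{-1/6}$ --- which is indeed what the paper's own proof derives ($\delta_n^4\asymp\gamma_n\delta_n^{-2}$), the $n^{-2/3}$ in the theorem statement being the resulting \emph{rate}, not the correct discretization step.
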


{
  Theorem~\ref{th2} implies that the convergence rate of the sequence $\theta_n$ produced by Algorithm~\ref{algo:kw} is $O(n^{-2/3})$, provided that its input parameters are properly tuned, which is as good as the state-of-the-art convergence rate of classical KW algorithms~\cite{Walton22}.
  The detailed proof  is postponed to Appendix~\ref{sec:proof}.
}

\section{Application to Auto-scaling}\label{sec:application}

In this section, we discuss the applicability of Theorems \ref{th:as} and \ref{th:rate} to the auto-scaling problem modeled in Section~\ref{sec:framework}.
The main point is to construct a cost function $f$ satisfying  Assumptions \ref{as:reg} whose  minimum coincides with the minimum of the function $c$ defined in \eqref{eq:objective}.

This construction is not unique and several choices made in the following could certainly be changed, especially to improve the performance of the algorithm in practice (see Section \ref{sec:numerical}).

\subsection{Truncation/Extension: Construction of the Scale-up Rule}
\label{ssec:extension}

The first step is to construct a scale-up rule $\pi_\theta$ that maps any real parameter $\theta$ into a number of servers in $[0,N]$.
The idea is as follows: see $\theta$ as the average amount of servers to turn on. A simple choice would be  to sample $\pi_\theta$ from a binomial law with parameters $(N,\theta/N)$. This would be possible for $\theta \in (0,N)$. However, to comply with the optimization algorithm given in Section \ref{sec:KW},  $\theta$ must live on the whole real space $\R$.

For that, we construct an explicit mapping from $\R$ to $[\varepsilon, M-\varepsilon]$, with $\varepsilon > 0$, and $M <N$.
Here, $\varepsilon$ must be seen as a very small parameter whose role is only to get a smooth transition from $\R$ to $[0,M]$.
As for the choice of $M<N$, it  will be explained in the next subsection.

%
%
%
%
First, define the following smooth step function $\psi_{a,b}$, for $a<b \in \R$:
\begin{equation*}
 \psi_{a,b}: x \in \R \mapsto
 \begin{cases}
  0 & \text{ if } x < a \\
  \exp \prt{-\frac{(b-x)^2}{x-a}} & \text{ if } a \leq x \leq b \\
  1 & \text{ if } b \leq x.
 \end{cases}
\end{equation*}
This function is equal to $0$ on $(-\infty,a]$, equal to $1$ on $[b,+\infty)$, and is smooth on $\R$. Using this function, for $\varepsilon >0$,
we define
\begin{equation*}
 \theta_{\varepsilon,M} :\theta \in \R \mapsto
 \begin{cases}
   h_-(\theta) , \quad \text{ if } \theta < 0 \\
  h_-(\theta) \prt{1-\psi_{0,\varepsilon}(\theta)}+\theta \psi_{0,\varepsilon}(\theta),\\
  \quad \text{ if } 0 \leq \theta \leq \varepsilon \\
  \theta,
  \quad \text{ if } \varepsilon < \theta < M-\varepsilon \\
  \theta \prt{1-\psi_{M-\varepsilon,M}(\theta)}+h_+(\theta) \psi_{M-\varepsilon,M}(\theta),
  \\ \quad \text{ if } M-\varepsilon \leq \theta \leq M \\
  h_+(\theta), \quad \text{ if } M < \theta,
 \end{cases}
\end{equation*}
where $h_-(\theta)=\frac{\varepsilon}{3} \exp\prt{\frac{\theta}{\varepsilon}}$ and $h_+(\theta)=M-\frac{\varepsilon}{3} \exp\prt{- \frac{\theta-M}{\varepsilon}}$. We display a representation of this function for $M=10, \varepsilon=0.5$ in Figure~\ref{fig:parameter_function}.
\begin{figure}[hbtp]
 \centering
 \includegraphics[scale=0.60]{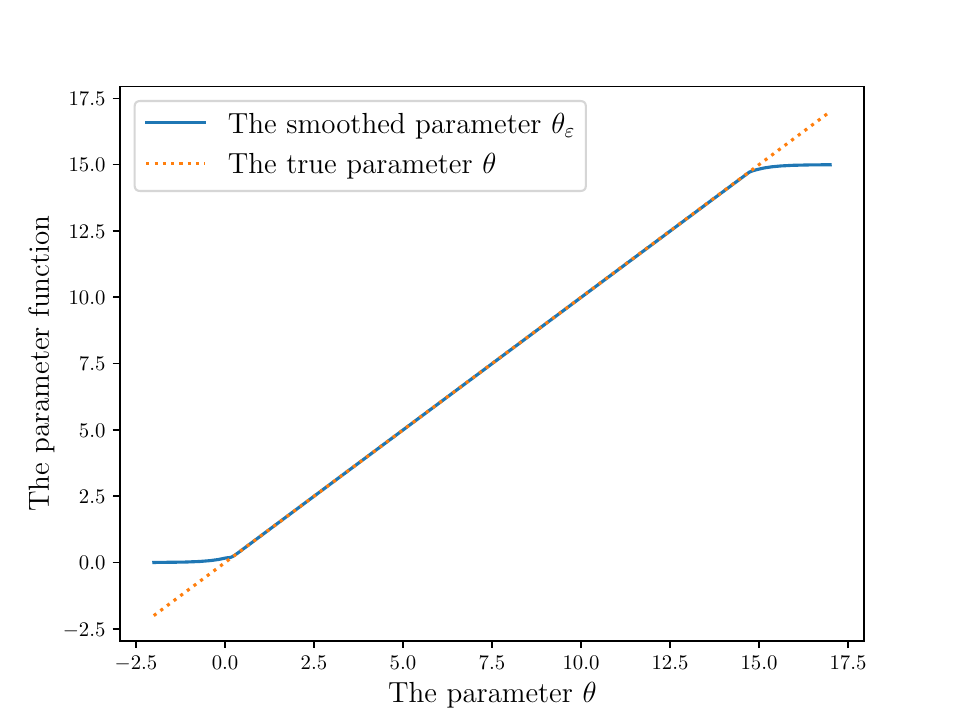}
 \caption{The smoothed parameter function $\theta_{\varepsilon,M}$ compared with $\theta$ \label{fig:parameter_function} }
\end{figure}
Here, $\theta_{\varepsilon,M}$ is adapted from $\theta$ and has the following properties:

\begin{itemize}
\item $\theta_{\varepsilon,M} \in [0,M]$,
 \item For $\theta \in [\varepsilon, M-\varepsilon]$, $\theta_{\varepsilon,M} = \theta$,
 \item $\theta_{\varepsilon,M} \to 0$ as $\theta \to -\infty$,
 \item $\theta_{\varepsilon,M} \to M$ as $\theta \to +\infty$.
\end{itemize}
We will therefore be able to sample from a binomial law with parameters $(M,\theta_{\varepsilon,M} /M)$. At any state $x$, the number of servers to turn on $\pi(x)$ follows a binomial law with parameters $(M,\theta_{\varepsilon,M})$.



\subsection{Smoothness}


Since the mapping $\theta \to \theta_{\varepsilon,M} \to  P_{\theta_{\varepsilon,M}}$ defined in the previous subsection only constructs  irreducible matrices, the set of recurrent states of the Markov chain  remains unchanged as $\theta$ gets updated. This implies that
the rank of $P_{\theta_{\varepsilon,M}}$ is constant for all $\theta \in \R$.

By construction, we also have for all  $\theta \in \R$, $\theta \mapsto P_{\theta_{\varepsilon,M}} \in \mathcal C^3$ (actually, it is in  $C^k$ for all $k> 0$).

These two properties imply that the stationary measure is also smooth, which we will prove by proving regularity properties of the pseudoinverse matrices related ot the transition matrices $P_{\theta_{\varepsilon,M}}$.

 The Drazin inverse of a matrix  is defined as follows.
\begin{definition}[\cite{Cline68}]
  \label{def:drazin}
  Let $A$ be a square real-valued matrix. The Drazin inverse exists and is the unique solution $X$ to the following equations:
  \begin{equation*}
   \begin{cases}
    AX &= XA, \\
    A^k &= A^{k+1}X \quad \text{ for some positive integer $k$,}\\
    X &=X^2 A.
    \end{cases}
  \end{equation*}
The smaller $k$ for which these equations still hold is called the Drazin index of $A$. We will denote the Drazin inverse of $A$ by $A^\#$.
 \end{definition}
We will be able to relate $f$ to the Moore-Penrose pseudo inverse of a matrix,
also called generalized inverse, which will be denoted by the superscript $^\dag$.
Regularity properties of this pseudo inverse will be proven using the following lemmas.
 \begin{lemma}[Theorem 4.3, \cite{Golub73}]
  \label{lem:diff_pseudo}
  Let $\theta \mapsto A_\theta$ be a Fréchet differentiable square matrix function with local constant rank in $\R$. We denote by $\mathbf{D}A_\theta$ the Fréchet-derivative at $\theta$ and we write the following equation as a function from $\R$ to $\R^{|\mathcal X|\times |\mathcal X|}$. For any $\theta \in \R$:
  \begin{equation}
  \label{eq:diff_pseudo}
  \mathbf{D}A_\theta^\dag = -A_\theta^\dag\mathbf{D}A_\theta A_\theta^\dag + A_\theta^\dag A_\theta^{\dag \top} \mathbf{D}A_\theta^\top P_{A_\theta}^\bot + \hspace{0.0cm} _{A_\theta}\hspace{-0.02cm} P^\bot \mathbf{D} A_\theta^\top A_\theta^{\dag \top} A_\theta^\dag,
 \end{equation}
 where $P_{A_\theta}^\bot:=I-A_\theta A_\theta^\dag$ is the projector on the orthogonal complement of the space spanned by the columns of $A_\theta$, and $_{A_\theta}\hspace{-0.02cm} P^\bot := I-A_\theta^\dag A_\theta$ is similarly defined for the space spanned by the rows of $A_\theta$.
 \end{lemma}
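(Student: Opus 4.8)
The plan is to prove the Golub--Pereyra differentiation formula in three stages. The crucial preliminary is that the \emph{locally constant rank} hypothesis is precisely what makes $\theta\mapsto A_\theta^\dag$ differentiable: without it the pseudoinverse need not even be continuous. I would establish differentiability by choosing, locally around a fixed $\theta_0$, a $\mathcal C^k$ full-rank factorization $A_\theta = B_\theta C_\theta$ with $B_\theta$ of full column rank and $C_\theta$ of full row rank (possible exactly because the rank is locally constant), so that $A_\theta^\dag = C_\theta^\top (C_\theta C_\theta^\top)^{-1}(B_\theta^\top B_\theta)^{-1} B_\theta^\top$ is a composition of $\mathcal C^k$ operations on invertible matrices; equivalently one may invoke the existence of a $\mathcal C^k$ singular value decomposition whose nonzero singular values stay bounded away from $0$ near $\theta_0$. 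Granting that $\mathbf{D}A_\theta^\dag$ exists, the formula follows by differentiating the Penrose identities.

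Fix $\theta$ and drop it from the notation; write $P := A A^\dag$ and $Q := A^\dag A$, the orthogonal projectors onto $\mathrm{col}(A)$ and onto $\mathrm{col}(A^\top)$, so that $P^\bot_{A} = I-P$ and ${}_{A}P^\bot = I-Q$. These are symmetric idempotents, and from the Penrose identities one has the absorption rules $A^\dag P = A^\dag = Q A^\dag$, $PA = A = AQ$, $(I-P)A = 0 = A^\dag(I-P)$ and $(I-Q)A^\dag = 0$. Differentiating $PA = A$ and using $P^2=P$, $P^\top=P$ first gives the off-diagonal block $(\mathbf{D}P)P = (I-P)(\mathbf{D}A)A^\dag$; taking its transpose for $P(\mathbf{D}P)$ and noting $P(\mathbf{D}P)P = 0$ yields
\begin{equation*}
\mathbf{D}P = (I-P)(\mathbf{D}A)A^\dag + A^{\dag\top}(\mathbf{D}A)^\top(I-P),
\end{equation*}
and symmetrically, differentiating $AQ = A$ and using $Q^2=Q=Q^\top$,
\begin{equation*}
\mathbf{D}Q = A^\dag(\mathbf{D}A)(I-Q) + (I-Q)(\mathbf{D}A)^\top A^{\dag\top}.
\end{equation*}

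Now write $A^\dag = Q A^\dag P$ and apply the product rule:
\begin{equation*}
\mathbf{D}A^\dag = (\mathbf{D}Q)\,A^\dag P + Q\,(\mathbf{D}A^\dag)\,P + Q A^\dag\,(\mathbf{D}P).
\end{equation*}
Substituting the two displays above and repeatedly applying the absorption rules ($A^\dag P = A^\dag$, $QA^\dag = A^\dag$, $(I-Q)A^\dag = 0$, $A^\dag(I-P) = 0$), the first term collapses to ${}_{A}P^\bot (\mathbf{D}A)^\top A^{\dag\top} A^\dag$ and the third to $A^\dag A^{\dag\top}(\mathbf{D}A)^\top P^\bot_{A}$. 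For the middle term, which still contains $\mathbf{D}A^\dag$, I would differentiate the identity $A A^\dag A = A$ to get $A(\mathbf{D}A^\dag)A = \mathbf{D}A - (\mathbf{D}A)Q - P(\mathbf{D}A)$, whence
\begin{equation*}
Q(\mathbf{D}A^\dag)P = A^\dag\bigl[A(\mathbf{D}A^\dag)A\bigr]A^\dag = A^\dag(\mathbf{D}A)A^\dag - A^\dag(\mathbf{D}A)A^\dag - A^\dag(\mathbf{D}A)A^\dag = -A^\dag(\mathbf{D}A)A^\dag,
\end{equation*}
using $QA^\dag = A^\dag$ and $A^\dag P = A^\dag$. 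Adding the three contributions gives exactly \eqref{eq:diff_pseudo}.

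The projector computations are routine; the two points needing care are (i) the differentiability of $A_\theta^\dag$, for which the constant-rank assumption is indispensable, and (ii) the apparent circularity caused by $\mathbf{D}A^\dag$ appearing on both sides, which is resolved by observing that after the absorption rules only the ``core'' block $Q(\mathbf{D}A^\dag)P$ survives, and that this block is pinned down --- without any reference to $\mathbf{D}A^\dag$ --- by sandwiching the differentiated identity $A A^\dag A = A$ between $A^\dag$ on both sides. I expect (i) to be the genuine obstacle, in the sense that it is the step where the hypotheses of the lemma are actually consumed.
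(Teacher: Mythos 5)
Your proof is correct. Note that the paper itself does not prove this statement at all: it is imported verbatim as Theorem~4.3 of Golub and Pereyra \cite{Golub73}, so there is no in-paper argument to compare against; what you have done is reconstruct, essentially, the classical Golub--Pereyra derivation. Your two-stage structure is sound: the local full-rank factorization $A_\theta=B_\theta C_\theta$ (valid precisely because the rank is locally constant, since the same $r\times r$ invertible block persists in a neighborhood) gives differentiability of $A_\theta^\dag = C_\theta^\top(C_\theta C_\theta^\top)^{-1}(B_\theta^\top B_\theta)^{-1}B_\theta^\top$, and this is indeed the step where the constant-rank hypothesis is consumed --- without it $\theta\mapsto A_\theta^\dag$ need not even be continuous, which is also why the paper's Proposition~1 invokes Stewart's continuity result under constant rank. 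The algebraic part checks out line by line: symmetry of $\mathbf{D}P$ and $\mathbf{D}Q$ follows from the projectors being symmetric for every $\theta$, the identities $(\mathbf{D}P)P=(I-P)(\mathbf{D}A)A^\dag$ and $\mathbf{D}P=(\mathbf{D}P)P+P(\mathbf{D}P)$ give your two projector-derivative formulas, the absorption rules collapse the outer terms of the product rule applied to $A^\dag=QA^\dag P$ to the second and third summands of \eqref{eq:diff_pseudo}, and sandwiching the differentiated identity $AA^\dag A=A$ between $A^\dag$ on both sides pins down $Q(\mathbf{D}A^\dag)P=-A^\dag(\mathbf{D}A)A^\dag$, resolving the apparent circularity exactly as you say. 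The only cosmetic caveat is that the smooth-SVD alternative you mention in passing requires more care than the factorization route, so you should lean on the latter; as written, the argument is complete.
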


 In the previous lemma, the Fréchet derivatives can be directly seen as matrix functions $ \R \to \R^{\abs{\mathcal X} \times \abs{\mathcal X}}$ rather than linear applications.

We can now state and prove the following regularity property on the cost function.
\begin{proposition}
 \label{pro:regularity}
 The function $\theta \mapsto  m_{\theta_{\varepsilon,M}}(x)$ is $C^\infty$.
\end{proposition}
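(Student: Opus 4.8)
The goal is to show that $\theta \mapsto m_{\theta_{\varepsilon,M}}(x)$ is $C^\infty$. The strategy is to express the stationary measure $m_\theta$ in terms of a pseudoinverse of a matrix built from the transition matrix $P_{\theta_{\varepsilon,M}}$, and then invoke the differentiation formula of Lemma~\ref{lem:diff_pseudo} repeatedly. First I would recall the standard fact that for an irreducible finite Markov chain with transition matrix $P_\theta$ (write $P_\theta$ for $P_{\theta_{\varepsilon,M}}$ throughout), the generator-type matrix $A_\theta := I - P_\theta$ has rank $|\mathcal X|-1$ and its Drazin inverse (equivalently, since $A_\theta$ has index $1$, its group inverse, which here coincides with the Moore--Penrose pseudoinverse up to the standard projector identities) allows one to recover the stationary vector: concretely, $m_\theta$ is the unique probability vector in the left null space of $A_\theta$, and it can be written via a closed formula involving $A_\theta^\dag$ (for instance $m_\theta^\top = \mathbf{1}^\top/(\mathbf{1}^\top(I - A_\theta A_\theta^\dag)\mathbf{1})$-type expressions, or via the group inverse $A_\theta^\#$ as in Meyer's formula). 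The precise algebraic form does not matter; what matters is that $m_\theta$ is obtained from $A_\theta^\dag$ (or $A_\theta^\#$) by finitely many smooth algebraic operations: matrix products, sums, and division by a scalar that is bounded away from zero (the normalization, which is nonzero by irreducibility).

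\textbf{Key steps.} (i) Note that by the construction in Section~\ref{ssec:extension}, $\theta \mapsto P_{\theta_{\varepsilon,M}}$ is $C^\infty$ (it is built from the $C^\infty$ map $\theta_{\varepsilon,M}$ composed with the entrywise-polynomial dependence of $P$ on the binomial parameter), hence $\theta \mapsto A_\theta = I - P_\theta$ is $C^\infty$. (ii) By the remark in Section~\ref{ssec:extension}, all the matrices $P_{\theta_{\varepsilon,M}}$ are irreducible, so the set of recurrent states is constant and $A_\theta$ has \emph{constant rank} $|\mathcal X|-1$ for every $\theta \in \R$. (iii) Apply Lemma~\ref{lem:diff_pseudo}: since $A_\theta$ is Fréchet differentiable with locally constant rank, $\theta \mapsto A_\theta^\dag$ is differentiable and its derivative $\mathbf{D}A_\theta^\dag$ is given by~\eqref{eq:diff_pseudo}, which is a fixed algebraic expression (products and sums) in $A_\theta$, $A_\theta^\dag$, $\mathbf{D}A_\theta$, and their transposes. (iv) Bootstrap: the right-hand side of~\eqref{eq:diff_pseudo} involves only $A_\theta$, $A_\theta^\dag$ and $\mathbf{D}A_\theta$; we already know $\theta \mapsto A_\theta$ is $C^\infty$ and $\theta\mapsto A_\theta^\dag$ is $C^1$, so the right-hand side is $C^1$, whence $A_\theta^\dag$ is $C^2$; iterating, $A_\theta^\dag$ is $C^k$ for all $k$, i.e.\ $C^\infty$. (v) Finally, $m_{\theta_{\varepsilon,M}}(x)$ is a fixed smooth function (polynomial combination followed by division by a strictly positive scalar) of the entries of $A_\theta^\dag$, so it is $C^\infty$ in $\theta$, and composition with the $C^\infty$ map $\theta \mapsto \theta_{\varepsilon,M}$ — if one prefers to phrase everything starting from the raw parameter — preserves smoothness.

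\textbf{Main obstacle.} The delicate point is step~(ii) combined with the hypothesis of Lemma~\ref{lem:diff_pseudo}: one must be sure that the rank of $A_\theta$ is \emph{locally} (indeed globally) constant on all of $\R$, since the pseudoinverse is notoriously discontinuous at rank jumps. This is precisely where irreducibility of $P_{\theta_{\varepsilon,M}}$ for every $\theta$ is used: irreducibility forces $\dim\ker A_\theta = 1$ uniformly, so no rank jump can occur, and Lemma~\ref{lem:diff_pseudo} applies everywhere. A secondary, purely bookkeeping point is to justify that $m_\theta$ really is recoverable from $A_\theta^\dag$ by a globally smooth formula with nonvanishing denominator — this follows from standard group-inverse representations of stationary distributions for irreducible chains (e.g.\ the denominator being a sum of principal minors, strictly positive by the Markov chain tree theorem / Perron--Frobenius), so I would cite this rather than rederive it. Once these two facts are in place, the bootstrap argument in step~(iv) is routine and gives $C^\infty$ immediately.
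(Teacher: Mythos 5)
Your proposal is correct and follows essentially the same route as the paper: constant rank of $A_\theta=I-P_{\theta_{\varepsilon,M}}$ from irreducibility, the Golub--Pereyra differentiation formula (Lemma~\ref{lem:diff_pseudo}) bootstrapped to get $A_\theta^\dag\in C^\infty$, and recovery of $m_\theta$ from the (pseudo)inverse. The only difference is bookkeeping: where you cite ``standard group-inverse representations'' of the stationary distribution, the paper makes the chain explicit via $m_\theta(x)=\mathbf e_x^\top P^\infty_{\theta_{\varepsilon,M}}\mathbf e_x$, $P^\infty=I-A_\theta A_\theta^\#$ (Puterman, Theorem A.7) and Cline's identity $A_\theta^\#=A_\theta^k(A_\theta^{2k+1})^\dag A_\theta^k$ --- which also sidesteps your slightly loose identification of the group inverse with the Moore--Penrose pseudoinverse, since the two generally differ for $I-P$.
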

\begin{proof}
 Let $x \in \mathcal X$ be any state. Consider a Markov reward process with rewards $r(x)=1$ and $r(y)=0$ for $y \neq x$, with the same transitions $P_{\theta_{\varepsilon,M}}$. This Markov chain is unichain and the gain at $x$ is equal to $m_\theta(x)$, the stationary measure at $x$. It can be written with the Ces\`aro-limit:
 $$m_\theta(x) = \mathbf{e}_x^\top {P_{\theta_{\varepsilon,M}}^\infty} \mathbf{e}_x,$$
 where $P_{\theta_{\varepsilon,M}}^\infty:= \lim_{T \to \infty} \frac{1}{T}  \sum_{t=1}^T P_{\theta_{\varepsilon,M}}^t$ is the limiting matrix and $\mathbf e_x$ is the vector equal to $1$ at $x$ and $0$ everywhere else.
 With \cite[Theorem A.7]{Puterman94}, we relate the limiting matrix $P_\theta^\infty$ to the Drazin inverse $(I-P_{\theta_{\varepsilon,M}})^{\#}$ of $(I-P_{\theta_{\varepsilon,M}})$. Letting $I$ denote the identity matrix:
 $$ P_{\theta_{\varepsilon,M}}^\infty=I - (I-P_{\theta_{\varepsilon,M}})(I-P_{\theta_{\varepsilon,M}})^{\#}.$$
 Let us now call $A_\theta:=I-P_{\theta_{\varepsilon,M}}$, and denote by $k$ the Drazin index of $A_\theta$, as defined in Definition~\ref{def:drazin}. We use \cite[Theorem 5]{Cline68} to relate the Drazin inverse to the generalized pseudoinverse in the following way:
 \begin{equation}
  \label{eq:drazin_to_pseudo}
  A_\theta^{\#} = A_\theta^k (A_\theta^{2k+1})^\dag A_\theta^k.
 \end{equation}
 In order to prove the regularity of $f$, we therefore need to show that the function $\theta \mapsto A_\theta^\dag$ is in $C^3$ itself.
 To prove it, recalling that $P_\theta$ and therefore $A_\theta$ is of constant rank over $\R$, we can use Lemma~\ref{lem:diff_pseudo} to explicitly get the derivative of $\theta \mapsto A_\theta$, which itself is continuous, using the continuity of the generalized inverse (\cite[Theorem 5.2]{Stewart69}), as the rank of $A_\theta$ is constant. We can then use equation~\ref{eq:diff_pseudo} to iterate the derivation process and prove that all the derivatives of $\theta \mapsto A_\theta^\dag$ exist and are continuous. Using equation~\ref{eq:drazin_to_pseudo}, we finally get that $\theta \mapsto A_\theta^\#$ is in $C^\infty$, and therefore $\theta \mapsto m_{\theta_{\varepsilon,M}}(x)$~is~$C^\infty$.
\end{proof}

\subsection{Truncation of the Parameter Domain and Penalty Function}
\label{ssec:trunc}


We are now ready to construct the functions $f$  from the autoscaling costs $c$ and $C$.
Let $M <N$ be any truncation of the parameter space.

Set $f_1(\theta) = \E C(X_\infty^{\theta_{\varepsilon,M}})$.
 We can thus rewrite this function as
 \begin{equation}
  \label{eq:recost}
  f_1(\theta)= \sum_{x \in \mathcal X} C(x) m_{\theta_{\varepsilon,M}}(x)
 \end{equation}

 This function is defined on $\R$ and is  $C^\infty$.
 In the following,  we assume that there exists $M < N$ such that this smooth function has a unique minimum inside $(0,M)$ and is strongly convex in $(0,M)$. Such an interval is displayed in red in Figure~\ref{fig:numerical} (left).

The last  step is to construct a penalty function to extend the cost function unboundly outside $[0,M]$.
Let
\begin{equation}
 \label{eq:pen_def}
  \pen(\theta):= (1-\psi_{0,\varepsilon}(\theta))(\theta-\varepsilon)^2 + \psi_{M-\varepsilon,M}(\theta)(\theta-M+\varepsilon)^2.
\end{equation}

We can now construct the function $f$ as follows:
$f(\theta) := f_1(\theta) + b(\theta)$.

\begin{itemize}
\item By construction of the smooth junction between $f_1$ and $b$, $f$ is also  $C^\infty$ over $\R$ so it satisfies Assumption (\ref{as:reg}.a).
By construction, the cost function $f_1$ is $\mathcal C^3$ on the compact $[0,M]$. This implies that $f'_1$ is Lipschitz over $[0,M]$ and bounded over this interval. Since the penalty is quadratic outside $[0,M]$,  $f'$ is also Lipschitz over $\R$. (Assumption (\ref{as:reg}.b))
The decomposition of $f$ into $f_1$ and $b$ also implies Assumption (\ref{as:reg}.c) Assumption (\ref{as:reg}.d), as $f_1'$ is bounded.

\item As for Assumption \ref{as:mixing}, this is a direct consequence of the fact that $\theta_{\varepsilon,M}$ lives in the compact $[0,M]$ and $P^t_\theta$ as well as $m_\theta$ are continuous functions of $\theta$.

\item Finally, Assumptions \ref{as:uniqueness} and \ref{as:convex} are true for  $f$ as soon as one can find a small enough interval $[0,M]$ where $f_1$ is strongly convex and has a unque minimum in the interior of this interval. This is checked  numerically in the following  section.

\end{itemize}

The final point is to notice that $c$ and $f$ coincide over $[\varepsilon, M-\varepsilon]$ and therefore have the same minimum $\theta^*$ in this interval.

\subsection{Numerical Evaluation}
\label{sec:numerical}

By means of numerical simulations, we now evaluate the convergence properties of Algorithm~\ref{algo:kw} when applied to the proposed auto-scaling algorithm (Algorithm~\ref{algo:autoscaling}).
Here, we use a simplified form (compared with Section \ref{ssec:extension}) for the scale-up rule $\pi_\theta(x)$  and we set $\pi(x)$ to 
\begin{align}
\label{def:autoscaling}
%
%
\min\big\{ (\lfloor \theta \rfloor+\I{V< \theta-\lfloor \theta \rfloor} - x_3+x_4)^+,\, N-x_2-x_3-1\big\}
\end{align}
where $V$ is an independent random variable uniformly distributed over [0,1].
Roughly speaking, since the number of init$_0$ servers (i.e., $x_3-x_4$) is the number of servers that will be available to use in the short future, we let this number be $\theta-(x_3-x_4)$.
The randomization in $V$ is used because the number of servers to activate must by an integer but we allow $\theta$ to be a real number because the optimal number of servers to initialize may not be an integer \emph{in average}.
Finally, the $\min$ and $(\cdot)^+:=\max\{\cdot,0\}$ operators simply ensure that boundary conditions are satisfied.

\begin{remark}
If $\theta=0$, then no init$_0$ servers exist, which implies $x_4=x_3$ at all times, and the Markov chain under investigation (defined by \eqref{eq:Q_def}) boils down to the Markov chain investigated in~\cite{scaleperrequest}.
\end{remark}

In our simulations, we consider the following parametrization:
\begin{enumerate}
 \item
For the parameters that define the underlying Markov chain, we let $N=50$, $\lambda\in\{0.15,0.3\}$ (arrival rate), $\mu=1$ (service rate), $\beta=0.1$ (initialization rate) and $\gamma=0.01$ (expiration rate).
If a time unit is 10 milliseconds, these values are realistic for serverless applications~\cite{scaleperrequest,Google}.
 \item
For the parameters that define the cost function \eqref{eq:cost}, we let $w_{\rm rej}=10^3$, $w_1=w_2=1$, $w_3=5$ and $w_4=100$. Note that $w_3\ge \max\{w_1,w_2\}$ because initializing servers perform a batch of operations (connecting to database, loading libraries and data, etc.) and these are very expensive from the point of view of power consumption.
Also, to make user-perceived performance and energy consumption comparable, we choose $w_4$ and $w_{\rm rej}$ to be significantly greater than $w_3$.
 \item
For the parameters that are used by Algorithm~\ref{algo:kw}, in accordance with the assumptions in Theorems~\ref{th:as} and~\ref{th2}, we choose
$\gamma_n=10/n$, $\delta_n=n^{-2/3}$, $\tau = 10^6$, $\theta_0\in\{1,10\}$, $K=2$ and $T=10^8$.
\end{enumerate}

\begin{figure*}
\centering
\makebox[\textwidth][c]{\hspace{0cm}\includegraphics[width=1.0\textwidth]{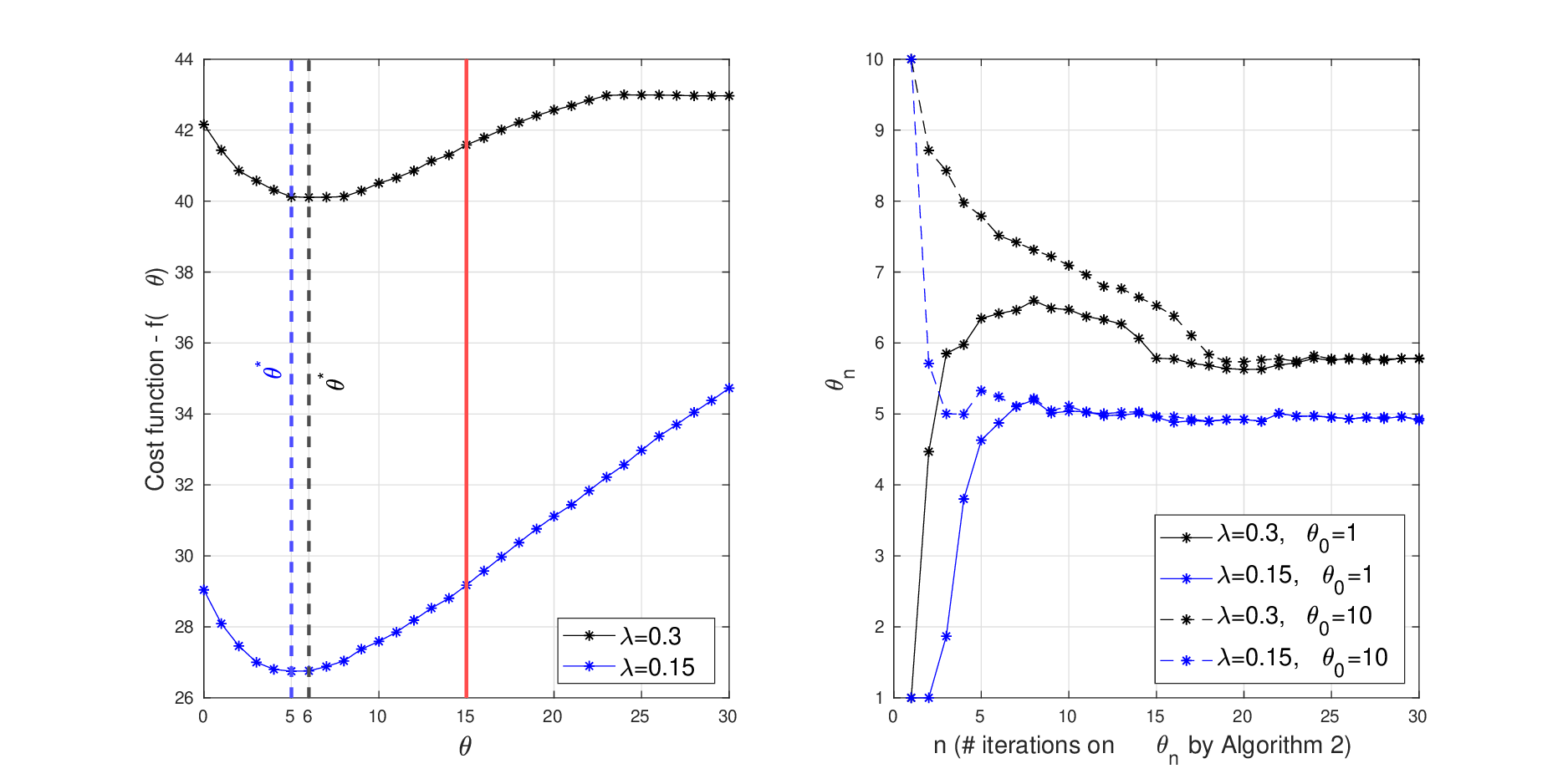}}%
\caption{Plots of the cost function $f(\theta)$ (left) and of the sequences produced by Algorithm~\ref{algo:kw} (right).
The red line corresponds to the truncation to the interval $[0,M]$ over which the function is convex.}
\label{fig:numerical}
\end{figure*}

Figure~\ref{fig:numerical} (left) plots the cost function $c(\theta)$ (defined in \eqref{eq:objective}), which plays the role of $f(\theta)$ in Section~\ref{sec:KW}.
It admits a unique minimum, $\theta^*$, and it is strongly convex on a neighbourhood $\theta^*$; here, the red vertical line is used to mark the convexity region postulated in Section~\ref{ssec:trunc}.
These properties have also been tested numerically over a wide range of parameters, and this is in agreement with the approach discussed in Section~\ref{ssec:trunc}.
For the plots in the figure, $\theta^*\in[5,6]$, which means that initializing 6 or 7 servers is much better than initializing 1 server as in~\cite{scaleperrequest}; recall that $\theta$ is the number of \emph{extra} server to initialize.
From the figure, we observe the potential gain in activating the optimal surplus of servers amounts to 5-8\%.

Figure~\ref{fig:numerical} (right) plots the sequence $\theta_n$ produced by Algorithm~\ref{algo:kw} from different initial conditions. All trajectories converge to $\theta^*$.
Thus, the proposed algorithm improves with respect to the existing scale-up rule of no activating extra servers.
Importantly,
we notice that the trajectories corresponding to $\lambda=0.15$ converge much faster than the trajectories corresponding to $\lambda=0.3$. This quantifies the impact of the mixing time of the underlying Markov chain, which in our framework is captured by Assumption~\ref{as:mixing}: the smaller the $\lambda$, the smaller the mixing time and therefore the easier the sampling close to stationarity.

{

\subsection{Comparing with Fast $\theta$-updates}
\label{sec:comparison}

In Algorithm~\ref{algo:kw}, the reserve size parameter $\theta$ is updated only after the system has been observed for a sufficiently long period, ensuring that its dynamics approximate its stationary behavior. This observation time, linked to the mixing time of the underlying Markov chain, is controlled by the variable $\tau_n$. An alternative approach could involve updating $\theta$ on the same timescale as the Markov chain, ``without waiting for stationarity''.
In other words, $\theta$ is be updated after the system undergoes a fixed number of state transitions, such as $10^2$ or $10^3$, since the last update.
As discussed in the Introduction, this stochastic-approximation approach has been already considered in the literature, e.g.,~\cite{allmeier_gast2024}, though we stress that the existing theoretical results do not apply to our case. Thus, we cannot expect that this alternative approach makes $\theta$ converge to the desired optimal point~$\theta^*$.

The goal of this section is to evaluate this alternative approach within the same simulation setting described in Section~\ref{sec:numerical} and compare it with ours.
Within the considered auto-scaling framework, we demonstrate that our approach provides improved performance.

In our simulations, we consider the following setup:
\begin{itemize}
 \item \emph{Scenario 1}: Identical parameter setting. Simulations are performed in exactly the same setting that produced Figure~\ref{fig:numerical} (right) with the exception that $\theta$ is updated every $10^2$ or $10^3$ state transitions of the underlying Markov chain.

 \item \emph{Scenario 2}: Corrected $\gamma$ weights. As in Scenario 1 but~$\theta$ is less sensitive to the gradient updates. Specifically, the weights $\gamma_n$ are multiplied by the number of state transitions ($10^2$ or $10^3$) and divided by $\tau=10^6$ (used to generate Figure~\ref{fig:numerical} (right)). In this manner, $\theta$ changes slower than in Scenario~1 and its variations are of the same order of the ones considered in the evaluation of our algorithm.

\end{itemize}

For Scenario~1, the corresponding sequences of $\theta$'s are plotted in Figures~\ref{fig:comparison1} and~\ref{fig:comparison2}.
\begin{remark}
\label{rem:sametime}
%
Any point of the $x$-axis of Figure~\ref{fig:comparison1} has a corresponding simulation time of the underlying Markov chains, and the same holds true for the $x$-axis of Figure~\ref{fig:numerical} (right).
It is important to remark that such points coincide on a 1:1 scale. In other words, the curves in these figures describe the evolution of $\theta$ on the same time interval.
\end{remark}

\begin{figure*}
\centering
\makebox[\textwidth][c]{\hspace{0cm}\includegraphics[width=1.0\textwidth]{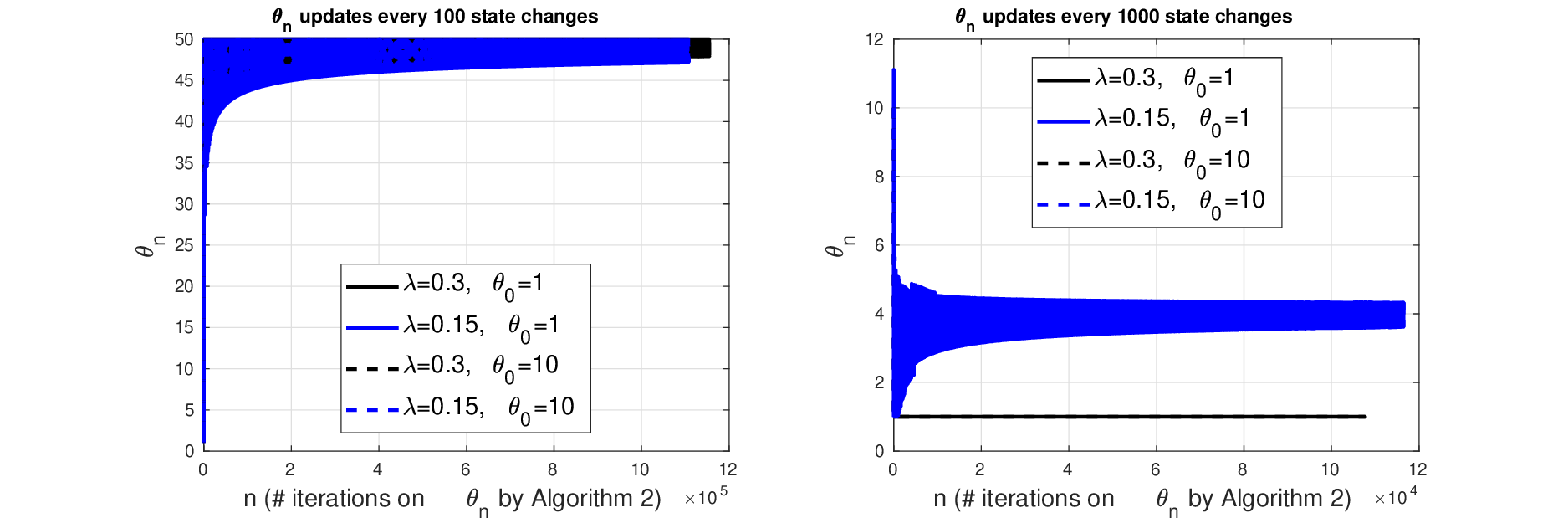}}%
\caption{Scenario 1: Plots of the sequences produced by Algorithm~\ref{algo:kw} when the underlying Markov chain is simulated for $\tau_n=$100 (left) and $\tau_n=$1000 (right) steps.
In simulation time, the $x$-axis corresponds exactly to the one in Figure~\ref{fig:numerical}.
}
\label{fig:comparison1}
\end{figure*}

\begin{figure*}
\centering
\makebox[\textwidth][c]{\hspace{0cm}\includegraphics[width=1.0\textwidth]{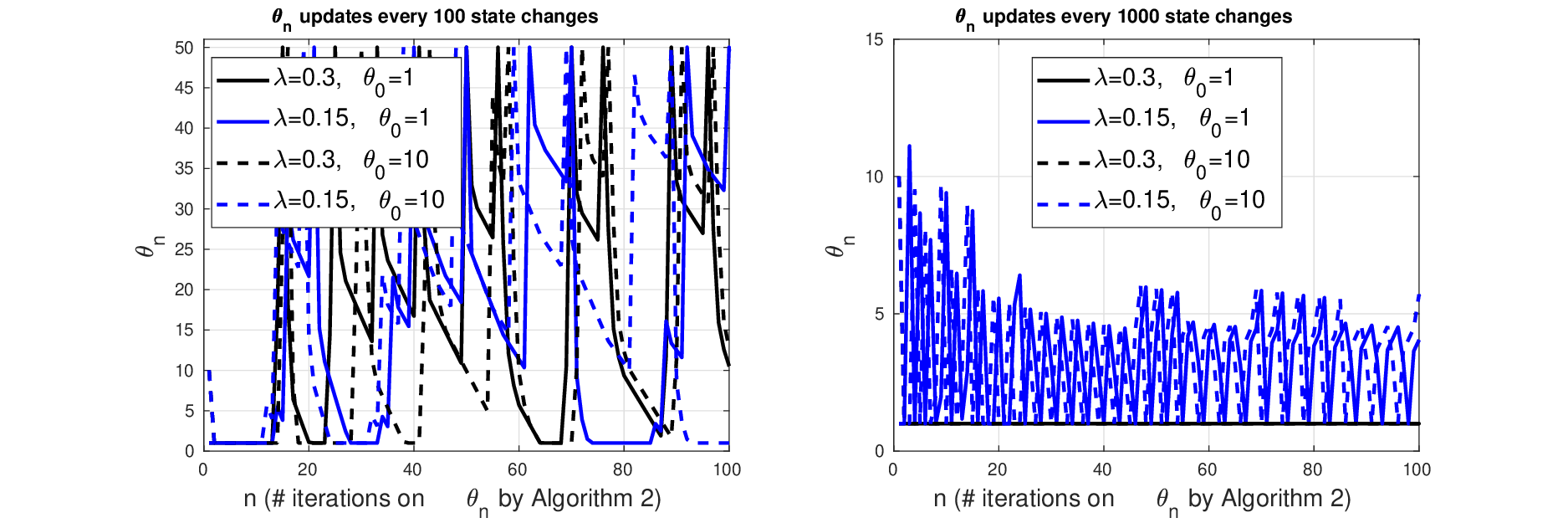}}%
\caption{A zoom of the plots of Figure~\ref{fig:comparison1} obtained by truncation of the $x$-axis.}
\label{fig:comparison2}
\end{figure*}
Let us comment on the plots in these figures:
\begin{itemize}
 \item
As expected, the plots in Figure~\ref{fig:comparison1} are not fully visible
because the fluctuations of $\theta$ are very sensitive to the contribution of the gradient values.
For this reason, we reported the first 100 $\theta$ points in Figure~\ref{fig:comparison2}.

\item The curves in Figure~\ref{fig:comparison1} (left) indicate that $\theta$ tends to follow trajectories that attempt to escape the feasible region from above, which is constrained by $N=50$ servers, implying $\theta \leq 50$ necessarily. These trajectories exhibit oscillations with maximum amplitude ($N=50$), which is impractical and clearly suboptimal.

\item In contrast, for $\lambda=0.3$, Figure~\ref{fig:comparison1} (right) shows that $\theta$ tends to escape the feasible region from \emph{below}. For $\lambda=0.15$, $\theta$ oscillates within the interval $4\pm0.5$, while the optimal $\theta$ (between five and six, as shown in Figure~\ref{fig:numerical}) lies outside this range. While waiting for 1000 state transitions slightly improves results compared to 100 transitions, the outcomes remain unsatisfactory.

\end{itemize}

For Scenario~2, the corresponding sequences of $\theta$'s are plotted in Figure~\ref{fig:comparison3}, where again the $x$-axis maps in simulation time 1:1 with the
the $x$-axis in Figures~\ref{fig:numerical} (right) and~\ref{fig:comparison1}.
\begin{figure*}
\centering
\makebox[\textwidth][c]{\hspace{0cm}\includegraphics[width=1.0\textwidth]{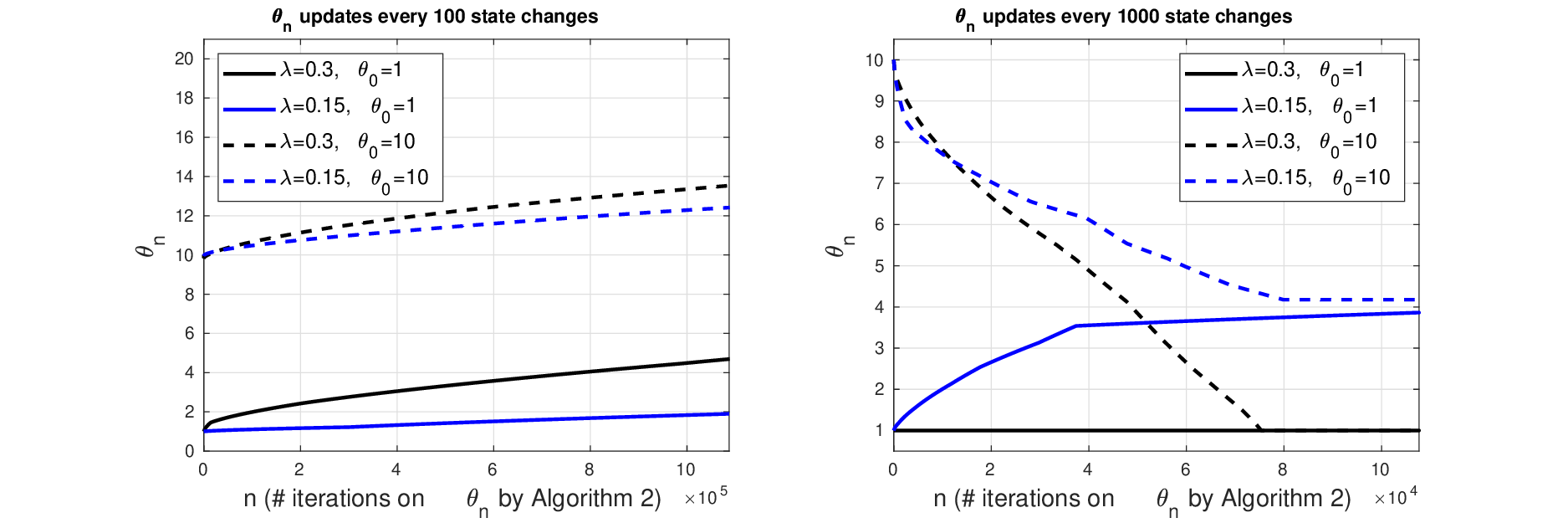}}%
\caption{Scenario 2: Plots of the sequences produced by Algorithm~\ref{algo:kw} when the underlying Markov chain is simulated for $\tau_n=$100 (left) and $\tau_n=$1000 (right) steps.
In simulation time, the $x$-axis corresponds exactly to the one in Figure~\ref{fig:numerical}.
}
\label{fig:comparison3}
\end{figure*}
{As expected, the large oscillations encountered in Scenario~1 are smoothed out by scaling~$\gamma_n$.
We observe that the resulting curves
are sensitive to the initial conditions and input parameters and 
do not  converge to a point: They
tend to move away from the true optimum $\theta^*$, which is around 5 and 6 as shown in Figure~\ref{fig:numerical} (left).
Also, the black curves tend to behave as in Scenario~1 (in average).

In the considered auto-scaling context, the simulation results above indicate that ``the fast $\theta$-update approach'' does not work.}

}




\appendix

\section{Proof of our Main Results}
\label{sec:proof}

In this appendix, we provide proofs for our main results, i.e., Theorems~\ref{th:as} and \ref{th2}.
To do so, it is convenient to rewrite the parameter update rule as follows
 \begin{equation}
  \label{eq:algoRew2}
  \theta_{n+1}=\theta_n-\stepsize_n \prt{f'(\theta_n) + \btermn + \ntermn + \mtermn},
 \end{equation}
where we define
\begin{align*}
 \ntermn:=&\frac{\cost(\theta_n+\delta_n,X_\infty^{\theta_n+\delta_n})-\cost(\theta_n-\delta_n,X_\infty^{\theta_n-\delta_n})}{2\delta_n} \\
 & -\frac{f(\theta_n+\delta_n)-f(\theta_n-\delta_n)}{2\delta_n},\\
 \btermn:=&\frac{f(\theta_n+\delta_n)-f(\theta_n-\delta_n)}{2\delta_n} -  f'(\theta_n),\\
 \mtermn:=&\frac{\hat f_n(\theta_n+\delta_n)-\hat f_n(\theta_n-\delta_n)}{2\delta_n}\\
 & -\frac{\cost(\theta_n+\delta_n,X_\infty^{\theta_n+\delta_n})-\cost(\theta_n-\delta_n,X_\infty^{\theta_n-\delta_n})}{2\delta_n}.
\end{align*}
The rewriting~\eqref{eq:algoRew2} will facilitate the control of some difference terms.

We consider specific samples $X_\infty^{\theta_n+\delta_n}$ and $X_\infty^{\theta_n-\delta_n}$ from the stationary distribution of the Markov chains with parameters $\theta_n+\delta_n$ and $\theta_n-\delta_n$ respectively, so that for $i=0,\ldots,K-1$, the couplings $\prt{X_{T_n+i\tsimul_n,T_n+(i+1)\tsimul_n}^{\theta_n+\delta_n},X_\infty^{\theta_n+\delta_n}}$ and $\prt{X_{T_n+(K+i)\tsimul_n,T_n+(K+i+1)\tsimul_n}^{\theta_n-\delta_n},X_\infty^{\theta_n-\delta_n}}$ are optimal, as defined in \cite[Remark 4.8]{Peres08}. More precisely, for $i=0$ for example, this means that $X_{T_n,T_n+\tsimul_n}^{\theta_n+\delta_n}$ and $X_\infty^{\theta_n+\delta_n}$ are different with probability $\norm{P_{\theta_n+\delta_n}^{\tsimul_n}(x,\cdot)-m_{\theta_n+\delta_n}}_1$, with $x$ denoting the starting state for the Markov chain simulation between time-steps $T_n$ and $T_n+\tsimul_n$; in the remainder, $\|\cdot\|_1$ denotes the $L_1$ norm.

\subsubsection{Proof of Theorem~\ref{th:as}}

Before delving into the proof, let us first show a preliminary lemma.
\begin{lemma}
\label{lem:variance}
Asumming \ref{as:reg}, the variance conditioned on $\theta$ is finite:
$$\sup_\theta{\rm Var}_\theta(\cost(\theta,X_\infty^\theta))<\infty.$$
\end{lemma}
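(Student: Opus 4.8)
The plan is to show that the conditional variance is bounded by bounding the second moment $\E_\theta[\cost(\theta,X_\infty^\theta)^2]$ uniformly in $\theta$, since variance is dominated by the second moment. By Assumption~(\ref{as:reg}.c), $\cost(\theta,x)=\costOg(\theta,x)+\pen(\theta)$, so $\cost(\theta,X_\infty^\theta) - \E_\theta[\cost(\theta,X_\infty^\theta)] = \costOg(\theta,X_\infty^\theta) - \E_\theta[\costOg(\theta,X_\infty^\theta)]$ because the deterministic penalty term $\pen(\theta)$ cancels. Hence ${\rm Var}_\theta(\cost(\theta,X_\infty^\theta)) = {\rm Var}_\theta(\costOg(\theta,X_\infty^\theta))$.

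Then I would invoke the fact that $\costOg$ is positive and bounded above by $\costOg_{\max}$ (again Assumption~(\ref{as:reg}.c)). For a random variable taking values in $[0,\costOg_{\max}]$, the variance is at most $\costOg_{\max}^2/4$ (Popoviciu's inequality, or simply ${\rm Var}(Y) \le \E[Y^2] \le \costOg_{\max}\,\E[Y] \le \costOg_{\max}^2$). Taking the supremum over $\theta$ gives $\sup_\theta {\rm Var}_\theta(\cost(\theta,X_\infty^\theta)) \le \costOg_{\max}^2 < \infty$, which is the claim.

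There is essentially no obstacle here: the only point requiring a moment's care is the observation that subtracting the conditional mean removes the penalty term, so the boundedness of $\costOg$ alone suffices even though $\pen(\theta)$ itself need not be bounded over $\R$. I would state this explicitly so the reader sees why Assumption~(\ref{as:reg}.c) is exactly what is needed. The argument is entirely elementary and self-contained given the assumptions already in force.
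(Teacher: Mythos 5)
Your proof is correct and follows essentially the same route as the paper: decompose $\cost=\costOg+\pen$ via Assumption~(\ref{as:reg}.c), note that the deterministic penalty cancels under centering, and bound the variance of the $[0,\costOg_{\max}]$-valued variable $\costOg$. The only difference is cosmetic --- you obtain the sharper constant $\costOg_{\max}^2/4$ where the paper settles for $4\costOg_{\max}^2$, which is immaterial for the finiteness claim.
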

\begin{proof}
Using Assumption \ref{as:reg}.c, we can write:
\begin{align*}
 {\rm Var}_\theta(\cost(\theta,X_\infty^\theta)) &= \E\brac{ \prt{\cost(\theta,X_\infty^\theta) - \E\brac{\cost(\theta,X_\infty^\theta)}}^2 \mid \theta} \\
 &\leq 
 \E\brac{ \prt{\costOg(\theta,X_\infty^\theta) - \E\brac{\costOg(\theta,X_\infty^\theta)}}^2 \mid \theta}\\
 &\leq 4 \costOg_{\max}^2.
\end{align*}
\end{proof}

The proof of Theorem~\ref{th2} is divided in the following steps:
\begin{enumerate}
 \item We introduce the continuous-time interpolation $\bar \theta$ of the discrete process $\theta$.
 \item We show that $\bar \theta$ is an APT (asymptotic pseudotrajectory, see below) for the flow induced by $f'$, meaning that it remains ``close'' to a trajectory with flow $f'$.
 \item We then show that $\bar \theta$ is a precompact APT, i.e., $f(\bar \theta_t)$ remains bounded.
 \item We deduce that $\bar \theta$ and therefore $\theta$ share the same equilibrium set as the trajectories induced by the flow $f'$, which we assumed to be a single point: the minimum of $f$.
\end{enumerate}

\begin{proof}
We first remind the definition of an asymptotic pseudotrajectory (APT) in our setup.

\begin{definition}[Asymptotic pseudotrajectory, \cite{Benaim99}]
Let the continuous map 
\begin{align*}
\label{eq:flow}
\Phi &: \R_+ \times \R \to \R \\
(t,\theta) &\mapsto \Phi(t,\theta)=\Phi_t(\theta)
\end{align*}
be a semiflow, so that $\Phi_0$ is the identity and $\Phi_{t+s}=\Phi_t \circ \Phi_s$.

A continuous function $X$ is an asymptotic pseudotrajectory for the semiflow $\Phi$ if for any $T>0$:
\begin{equation*}
\label{eq:APT}
 \lim_{t\to\infty} \sup_{0\leq h \leq T} \abs{X(t+h)-\Phi_h(X(t))}=0.
\end{equation*}
Moreover, in $\R$, the asymptotic pseudotrajectory $X$ is said to be precompact if its image is bounded.
\end{definition}

Rather than dealing with the discrete trajectory~$(\theta_n)_{n \in \N}$, we consider its piecewise linear interpolated counterpart in continuous time, $(\bar \theta_t)_{t \in \R_+}$. This is defined as follows
 \begin{equation}
  \label{eq:interpol}
  \begin{cases}
   t_n=\sum_{i=1}^n \stepsize_i, \\
   \bar \theta_{t_n+s} = \theta_n + s\frac{\theta_{n+1}-\theta_n}{\stepsize_{n+1}} & \text{ for } 0\leq s < \stepsize_{n+1}.
  \end{cases}
 \end{equation}
Then, the goal is to use the following proposition from \cite{Benaim99}.
 \begin{proposition}[Proposition 4.1 in \cite{Benaim99} ]
Assume that $f'$ is Lipschitz, and that with probability $1$, for all $T>0$:
\begin{equation}
  \label{eq:bias_vanish}
  \lim_{n \to \infty} \sup_{k\in E_n}\croc{\abs{\sum_{i=n+1}^{k} \stepsize_i (\btermi+\mtermi+\ntermi)}}=0,
 \end{equation}
 where $E_n:=\croc{k \mid t_n < t_k \leq t_n+T}$ is the set of discrete-time timesteps $k$ such that the $t_k$ are within a timeframe $T$ of $t_n$. Then the interpolated process $(\bar \theta_t)_{t \in \R}$ is an asymptotic pseudotrajectory for the flow induced by $f'$.
\end{proposition}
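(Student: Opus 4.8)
The plan is to prove the stated implication directly, via the comparison-and-Gronwall argument that underlies the notion of asymptotic pseudotrajectory. First I would make the flow explicit: since $f'$ is Lipschitz, the ODE $\dot y(s)=-f'(y(s))$ has a unique global solution for every initial condition, so the semiflow $\Phi$ induced by $f'$ (where $\Phi_h(\theta)$ is the value at time $h$ of the solution started at $\theta$) is well defined and complete. Since $(\bar\theta_t)$ is piecewise linear with time-steps $\stepsize_n\to0$ (which follows from \eqref{eq:parameter1}, as $\stepsize_n/\delta_n\to0$ and $\delta_n$ is bounded), it suffices to prove the APT estimate anchored at the base times $t=t_n$; a general $t$ lies in some $[t_n,t_{n+1})$ and reduces to the base time up to an $O(\stepsize_{n+1})\to0$ correction on the window. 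I therefore fix $T>0$ and aim to show $\sup_{0\le h\le T}\abs{\bar\theta_{t_n+h}-\Phi_h(\theta_n)}\to0$ almost surely.

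Next I would set $y(s):=\Phi_{s-t_n}(\theta_n)$ and $e(s):=\bar\theta_s-y(s)$ on $[t_n,t_n+T]$, so $e(t_n)=0$. On each subinterval $(t_i,t_{i+1})$ the interpolation has derivative $\dot{\bar\theta}_s=-\prt{f'(\theta_i)+\btermi+\ntermi+\mtermi}$ while $\dot y(s)=-f'(y(s))$; integrating $\dot e$ from $t_n$ to $t_k$ yields the decomposition
\begin{equation*}
e(t_k)=\int_{t_n}^{t_k}\brac{f'(y(s))-f'(\bar\theta_s)}\,ds + R_{n,k} - S_{n,k},
\end{equation*}
where $S_{n,k}:=\sum_{i=n}^{k-1}\stepsize_i\prt{\btermi+\ntermi+\mtermi}$ coincides, up to reindexing, with the accumulated noise in \eqref{eq:bias_vanish}, and $R_{n,k}:=\sum_{i=n}^{k-1}\int_{t_i}^{t_{i+1}}\brac{f'(\bar\theta_s)-f'(\theta_i)}ds$ is the within-step discretization error. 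Bounding the first integral by $L\int_{t_n}^{t_k}\abs{e(s)}ds$, where $L$ is the Lipschitz constant of $f'$, and applying a (continuous) Gronwall inequality to the nondecreasing majorant of $\abs{R}+\abs{S}$ gives
\begin{equation*}
\sup_{0\le h\le T}\abs{e(t_n+h)}\le e^{LT}\prt{\sup_{k\in E_n}\abs{R_{n,k}}+\sup_{k\in E_n}\abs{S_{n,k}}}+o(1),
\end{equation*}
the $o(1)$ absorbing the fractional last-step contributions at non-base times. The term $\sup_{k\in E_n}\abs{S_{n,k}}\to0$ almost surely is exactly hypothesis \eqref{eq:bias_vanish}, so everything reduces to controlling $\sup_{k\in E_n}\abs{R_{n,k}}$.

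The control of $R_{n,k}$ is the main obstacle, as it is the only place where the interpolation error couples to the (possibly large) per-step increments. Using $\abs{\bar\theta_s-\theta_i}\le\abs{\theta_{i+1}-\theta_i}=\stepsize_i\abs{f'(\theta_i)+\btermi+\ntermi+\mtermi}$ together with Lipschitzness gives $\abs{R_{n,k}}\le\tfrac{L}{2}\sum_{i=n}^{k-1}\stepsize_i^2\abs{f'(\theta_i)+\btermi+\ntermi+\mtermi}$, which I would split into a stochastic part and a drift part. For the stochastic part, Lemma~\ref{lem:variance} (giving an $O(\delta_i^{-1})$ bound on the martingale increment, an $O(\delta_i^2)$ bound on the bias, and an $O(\rho^{\tsimul_i})$ bound on the mixing term) yields $\E\abs{\btermi+\ntermi+\mtermi}=O(\delta_i^{-1})$, whence $\E\sum_i\stepsize_i^2\abs{\btermi+\ntermi+\mtermi}=O\prt{\sum_i\stepsize_i^2\delta_i^{-1}}\le\delta_0\,O\prt{\sum_i\stepsize_i^2\delta_i^{-2}}<\infty$; thus this series converges almost surely and its tail over $E_n$ vanishes. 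For the drift part, a preliminary discrete Gronwall estimate on $\abs{\theta_i-\theta_n}$ over the window, combined with the linear growth of $f'$ and the radial condition confining the iterates (Assumption~\ref{as:reg}, item~(1.d)), bounds $\max_{i\in E_n}\abs{f'(\theta_i)}$, so that $\sum_{i=n}^{k-1}\stepsize_i^2\abs{f'(\theta_i)}\le\stepsize_n\,T\,\max_{i\in E_n}\abs{f'(\theta_i)}\to0$ since $\stepsize_n\to0$.

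Combining the two, I obtain $\sup_{k\in E_n}\abs{R_{n,k}}\to0$ almost surely; plugging this and $\sup_{k\in E_n}\abs{S_{n,k}}\to0$ into the Gronwall bound gives $\lim_{n}\sup_{0\le h\le T}\abs{\bar\theta_{t_n+h}-\Phi_h(\theta_n)}=0$ for every $T>0$, which is exactly the definition of an asymptotic pseudotrajectory for the flow induced by $f'$. I expect the delicate point to remain the drift part of $R_{n,k}$, whose decay relies on the interplay of the scales $\stepsize_n\to0$, $\stepsize_n/\delta_n\to0$, and the confinement provided by Assumption~\ref{as:reg}~(1.d); the stochastic part and the Lipschitz/Gronwall bookkeeping are comparatively routine.
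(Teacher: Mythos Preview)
The paper does not prove this proposition: it is quoted verbatim as Proposition~4.1 of Bena\"im (1999) and used as a black box. The paper's actual work (in the proof of Theorem~\ref{th:as}) is to verify the hypothesis~\eqref{eq:bias_vanish} term by term and then to establish boundedness of the iterates, after which Bena\"im's result is simply invoked. So there is no ``paper's own proof'' to compare against; you have instead reconstructed (a version of) Bena\"im's original argument.

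Your comparison/Gronwall scheme is indeed the standard route to this result, and the decomposition into the Lipschitz integral, the accumulated noise $S_{n,k}$, and the within-step discretization $R_{n,k}$ is correct. However, note that to control $R_{n,k}$ you reach outside the proposition's stated hypotheses: you invoke Lemma~\ref{lem:variance}, the parametrization~\eqref{eq:parameter1}, and Assumption~\ref{as:reg}(1.d). The proposition as written only assumes $f'$ Lipschitz and condition~\eqref{eq:bias_vanish}, and from those alone the bound on $R_{n,k}$ does not follow---one also needs, e.g., a priori boundedness of the iterates or of the per-step increments (which is indeed part of Bena\"im's original hypotheses, and which the paper establishes separately before applying the proposition). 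So while your argument is sound for the paper's specific setting, it does not prove the proposition exactly as stated; it proves a version tailored to the ambient assumptions. This is a minor mismatch in scope rather than a mathematical error.
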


Let us first show Equation~\eqref{eq:bias_vanish}.
We deal with each of error term in~\eqref{eq:bias_vanish} separately. For any $T>0$ and any $k \in E_n$:
\begin{align*}
 \abs{\sum_{i=n+1}^{k} \stepsize_i \btermi}
 &\leq \sum_{i \in E_n} \stepsize_i \abs{\btermi}\\
 &\leq C_0 \sum_{i \in E_n} \stepsize_i \delta_i^2\\
 &\leq C_0 \delta_n^2 \sum_{i \in E_n} \stepsize_i
 \leq C_0 T \delta_n^2 \underset{n \to \infty}{\longrightarrow} 0
\end{align*}
where in the second inequality we have used the bound \eqref{eq:taylor2}, and in the third and last inequalities, we have used the definition of $E_n$ and that $(\delta_n)_n$ is decreasing from the parametrization properties \eqref{eq:parameter1}-\eqref{eq:parameter2}.

Now, let us consider the second term.
Let us recall that $X_\infty^{\theta_n+\delta_n}$ and $X_\infty^{\theta_n-\delta_n}$ build optimal couplings. Using Assumption~\ref{as:mixing}, these quantities are different with probability at most $C_1\rho^{\tsimul_n}=C_1 n^{-\alpha}$. Choosing $\alpha >1$,
and using a Borel-Cantelli lemma, we have that almost surely, for $n$ large enough: $X_{T_n,T_n+\tsimul_n}^{\theta_n+\delta_n}=X_\infty^{\theta_n+\delta_n}$. Proceeding similarly for $i=0,\ldots,K-1$ and $\theta_n-\delta_n$, and using a union bound, we obtain \eqref{ais9c0asc},
\begin{figure*}
\begin{align}
\nonumber
 \Pb\prt{\mtermn=0}
 &\geq
 \Pb\prt{\bigcap_{i=0}^{K-1}\prt{X_{T_n+i\tsimul_n,T_n+(i+1)\tsimul_n}^{\theta_n+\delta_n}=X_\infty^{\theta_n+\delta_n}\cap X_{T_n+(K+i)\tsimul_n,T_n+(K+i+1)\tsimul_n}^{\theta_n-\delta_n}=X_\infty^{\theta_n-\delta_n}}}\\
\nonumber
 &=
 1-\Pb\prt{\bigcup_{i=0}^{K-1}\prt{X_{T_n+i\tsimul_n,T_n+(i+1)\tsimul_n}^{\theta_n+\delta_n} \neq X_\infty^{\theta_n+\delta_n}\cup X_{T_n+(K+i)\tsimul_n,T_n+(K+i+1)\tsimul_n}^{\theta_n-\delta_n} \neq X_\infty^{\theta_n-\delta_n}}}\\
\nonumber
 &\geq
 1-\sum_{i=0}^{K-1}\prt{\Pb\prt{X_{T_n+i\tsimul_n,T_n+(i+1)\tsimul_n}^{\theta_n+\delta_n} \neq X_\infty^{\theta_n+\delta_n}}+ \Pb\prt{X_{T_n+(K+i)\tsimul_n,T_n+(K+i+1)\tsimul_n}^{\theta_n-\delta_n} \neq X_\infty^{\theta_n-\delta_n}}}\\
 &\geq
\label{ais9c0asc}
 1 \text{ for $n$ large enough,}
\end{align}
\end{figure*}
i.e., $\mtermn=0$ almost surely.
Therefore, with probability $1$,
\begin{align*}
 \abs{\sum_{i=n+1}^{k} \stepsize_i \mtermi}
 \underset{n \to \infty}{\longrightarrow} 0.
\end{align*}

Finally,
let us consider the last term.
First, we need to show that $\delta_n^2\E\brac{\ntermn^2 \mid \theta_n} < \infty$, so that with the tower rule:
 \begin{equation}
  \label{eq:as_eta}
  \sup_n\delta_n^2\E\brac{\ntermn^2} < \infty.
 \end{equation}
Let us calculate
 \begin{multline*}
  \E\brac{\prt{\cost(\theta_n+\delta_n,X_\infty^{\theta_n+\delta_n})-f(\theta_n+\delta_n)}^2 \mid \theta_n}
  \\= {\rm Var}_{\theta_n+\delta_n} \prt{\cost(X_{\infty}^{\theta_n+\delta_n})},
 \end{multline*}
so that, with the same reasoning for $\theta_n-\delta_n$, using that for any $a,b \in \R$, $(a+b)^2 \leq 2a^2+2b^2$, we get
\begin{multline*}
\sup_n 4\delta_n^2 \E\brac{\ntermn^2 \mid \theta_n}
\leq 2\sup_n \delta_n^2 {\rm Var}_{\theta_n+\delta_n} \prt{\cost(X_{\infty}^{\theta_n+\delta_n})}
\\ + 2\sup_n \delta_n^2 {\rm Var}_{\theta_n-\delta_n} \prt{\cost(X_{\infty}^{\theta_n-\delta_n})}
 < \infty,
\end{multline*}
where in the final step we have used Lemma~\ref{lem:variance}.

Using 
the parametrization property \eqref{eq:parameter1} and \cite[Theorem 12.1]{Williams91} on the convergence of martingales in $\mathcal{L}^2$, as $M_n:=\sum_{i=1}^n \stepsize_i \ntermi$ defines a martingale $M$ with $$\sum_i \stepsize_i^2 \E \brac{\ntermi^2} \leq \sup_n \delta_n^2 \E\brac{\ntermn^2 \mid \theta_n} \sum_i \stepsize_i^2 \delta_i^{-2}< \infty,$$ then almost surely, $M_n \to M_{\infty}$.
Then, for any $n\in \N$ and $k\in E_n$, with probability $1$:
\begin{align*}
\abs{\sum_{i=n+1}^k \stepsize_i \ntermi} & \leq
\abs{\sum_{i \geq n} \stepsize_i \ntermi}
+\abs{\sum_{i >\sup E_n} \stepsize_i \ntermi}\\
& \underset{n \to \infty}{\longrightarrow} 0.
\end{align*}
Overall, summing the $\btermi$, $\mtermi$ and $\ntermi$, we have proved that \eqref{eq:bias_vanish} holds with probability $1$. Now, we show that $\sup_n \abs{\theta_n} < \infty$.


For $n$ large enough, with probability $1$, $\abs{\btermn+\ntermn+\mtermn} < r/2,$ with $r$ defined in Assumption~\ref{as:reg}.d, so that  if $|\theta_n|>L$, then equation~\eqref{eq:algoRew2} gives $|\theta_{n+1}| < |\theta_n|-\stepsize_n\frac{r}{2}$. Otherwise, if $|\theta_n| < L$, then $\abs{f'(\theta_n)} \leq \sup_{\theta \in [-L,L]} \abs{f'(\theta)}$ (by Assumption~\ref{as:reg}.b) and with equation ~\eqref{eq:algoRew2} $|\theta_{n+1}| \leq |\theta_n| +\stepsize_n(\sup_{\theta \in [-L,L]} \abs{f'(\theta)}+r/2) \leq L +\stepsize_0(\sup_{\theta \in [-L,L]} \abs{f'(\theta)}+r/2)$, as $(\stepsize_n)$ is decreasing (by the parametrization property~\eqref{eq:parameter2}).

Therefore, with probability $1$, $(\theta_n)_n$ remains bounded and we can apply \cite{Benaim99}[Proposition~4.1], so that $\bar \theta$ is an asymptotic pseudotrajectory of the flow $\Phi$ induced by $f'$. It is precompact as $f$ is continuous and $\croc{\theta_n, n \in \N}$ is bounded with probability $1$.

We remark that the flow $\Phi$ satisfies $\frac{d\Phi_t(\theta)}{dt}= -f'(\Phi_t(\theta))$. Since
$$\frac{d}{dt} \brac{f(\Phi_t(\theta))} = \frac{d\Phi_t(\theta)}{dt} \times f'(\Phi_t(\theta)) = -f'(\Phi_t(\theta))^2 <0, $$ 
then $f$ is a Lyapounov function of the flow $\Phi$. Using \cite[Corollary 6.6]{Benaim99} and the uniqueness Assumption~\ref{as:uniqueness}, we get that $\bar \theta_t \overset{a.s.}{\to} \theta^*$ as it is the only minimum, and thus $\theta_n \overset{a.s.}{\to} \theta^*$ as desired.
\end{proof}

\subsubsection{Proof of Theorem~\ref{th2}}


Our approach consists in decomposing the term of interest in multiple terms and then in bounding each term individually.
In the decomposition, the main innovation and difficulty will come from the non-stationary term.
More specifically, letting $\xi_n:=\E\brac{\prt{\theta_n-\theta^*}^2}$, we write
\begin{align}
 \xi_{n+1} &= \xi_n + 2\E\brac{\prt{\theta_{n+1}-\theta_n} \prt{\theta_n-\theta^*}} + \E\brac{\prt{\theta_{n+1}-\theta_n}^2} \nonumber\\
 &\leq
 \xi_n \nonumber\\
 &\phantom{\leq}
 -2\stepsize_n\E\brac{f'(\theta_n) \prt{\theta_n-\theta^*}} \label{eq:comp_main}\\
  &\phantom{\leq}
 -2\stepsize_n\E\brac{\btermn \prt{\theta_n-\theta^*}} \label{eq:comp_bias}\\
  &\phantom{\leq}
  -2\stepsize_n\E\brac{\ntermn \prt{\theta_n-\theta^*}} \label{eq:comp_noise}\\
  &\phantom{\leq}
  -2\stepsize_n\E\brac{\mtermn \prt{\theta_n-\theta^*}} \label{eq:comp_trans}\\
   &\phantom{\leq}
   + \E\brac{\prt{\theta_{n+1}-\theta_n}^2}. \label{eq:comp_last}
\end{align}
In the remainder, we bound the five terms above following these lines:
\begin{itemize}

 \item
To bound \eqref{eq:comp_main}, we use Assumption \ref{as:convex} as in classical gradient descent algorithms.
With the strong convexity from Assumption~\ref{as:convex}, we obtain
\begin{align}
\nonumber
-2\stepsize_n\E\brac{f'(\theta_n)(\theta_n-\theta^*)}
& \leq - 2 \stepsize_n \kappa \E\brac{(\theta_n-\theta^*)^2}\\
\label{eq:convex_case}
& \leq -2\stepsize_n \kappa \xi_n.
\end{align}

 \item
To bound~\eqref{eq:comp_bias}, using Assumption~\ref{as:reg} and the Cauchy-Schwarz inequality, we obtain
 \begin{align*}
  &-2a_n\E\brac{\btermn \prt{\theta_n-\theta^*}}\\
  &\leq  2 \stepsize_n \E\brac{\btermn^2}^{1/2} \E\brac{\prt{\theta_n-\theta^*}^2}^{1/2} \\
  &\leq 2C_0\stepsize_n \delta_n^2\xi_n^{1/2},
 \end{align*}
where we used that $\btermn \leq C_0 \delta_n^2$ by assumption on $f$ and by its Euler discretization around any parameter. More precisely, we can write the Taylor expansion, for $\eps=1$ or $\eps=-1$, as
\begin{equation}
 \label{eq:taylor}
  f(\theta_n+\eps \delta_n) = f(\theta_n) + f'(\theta_n)\eps \delta_n +  f''(\theta_{\eps}) \frac{\delta_n^2}{2},
\end{equation}
for some $\theta_\eps \in [\theta_n-\delta_n,\theta_n+\delta_n],$ so that:
\begin{equation}
 \label{eq:taylor2}
 \btermn \leq \delta_n \frac{\abs{f''(\theta_{+1})-f''(\theta_{-1})}}{2} \leq \delta_n^2 \sup_{\theta \in \Theta} f^{(3)}(\theta).
\end{equation}

\item
To bound~\eqref{eq:comp_noise}, we notice that $\E\brac{\ntermn \mid \theta_n}=0$.  Taking the expectation,  we get $-2\stepsize_n\times$ $\E\brac{\ntermn \prt{\theta_n-\theta^*}}=0.$

\item
To bound~\eqref{eq:comp_trans}, using the ergodicity structure in Assumption~\ref{as:mixing}, the Assumption~\ref{as:reg} and first with a Cauchy-Schwarz inequality:
\begin{align*}
& -2 \stepsize_n \mathbb E \brac{\mtermn  (\theta_n-\theta^*)}\\
&\leq 2 \stepsize_n \E\brac{\mtermn^2}^{1/2} \E\brac{\prt{\theta_n-\theta^*}^2}^{1/2}\\
&=2 \stepsize_n \E\brac{\prt{\theta_n-\theta^*}^2}^{1/2} \E\brac{\E\brac{\mtermn^2 \mid \theta_n}}^{1/2}.
\end{align*}
With
i) Assumption~\ref{as:reg}.c on the instant cost function,
ii) using that $(\sum_{i=0}^{K-1} a_i)^2 \leq K\sum_{i=0}^{K-1}a_i^2$  for any $a_i\in \R$, which follows by a linear algebra argument, and
iii) letting in our case $a_i~:=~\costOg(\theta_n+\delta_n,X_{T_n+i \tsimul_n,T_n+(i+1)\tsimul_n})-\costOg(\theta_n+\delta_n,X_{\infty}^{\theta_n+\delta_n})$, we obtain
\begin{align*}
& \E\brac{\prt{\hat f_n(\theta_n+\delta_n)-\cost(\theta_n+\delta_n,X_{\infty}^{\theta_n+\delta_n})}^2 \mid \theta_n}\\
 &=
 \frac{1}{K^2}\E\brac{\prt{\sum_{i=0}^{K-1} a_i}^2 \mid \theta_n }
 \leq
 \frac{1}{K}\sum_{i=0}^{K-1}\E\brac{ a_i^2 \mid \theta_n}\\
 &\leq
  \frac{\costOg_{\max}^2}{K}\sum_{i=0}^{K-1} \norm{P_{\theta_n+\delta_n}^{\tsimul_n}(x_{\theta_n+\delta_n}^{(i,+)},\cdot)-m_{\theta_n+\delta_n}}_{1},
\end{align*}
where $x_{\theta_n+\delta_n}^{(i,+)}$ denotes the initial state for the $i-$th simulation of the Markov chain with parameter $\theta_n+\delta_n$ at episode $n$.
Applying the same argument, a similar bound is obtained for the Markov chain with parameter $\theta_n-\delta_n$.

Now, using that $(a+b)^2 \leq 2a^2+2b^2$  for any $a,b \in \R$, for the conditional expectation we obtain
\begin{align*}
& 4\delta_n^2\E\brac{\mtermn^2 \mid \theta_n} \\
&\leq
 2\E\Big[\prt{\hat f_n(\theta_n+\delta_n)-\cost(\theta_n+\delta_n,X_{\infty}^{\theta_n+\delta_n})}^2 \\
 & \quad+ \prt{\hat f_n(\theta_n-\delta_n)-\cost(\theta_n-\delta_n,X_{\infty}^{\theta_n-\delta_n})}^2 \mid \theta_n\Big]\\
&\leq
 \frac{2\costOg_{\max}^2}{K}\sum_{i=0}^{K-1} \norm{P_{\theta_n+\delta_n}^{\tsimul_n}(x_{\theta_n+\delta_n}^{(i,+)},\cdot)-m_{\theta_n+\delta_n}}_{1}\\
&\quad +\norm{P_{\theta_n-\delta_n}^{\tsimul_n}(x_{\theta_n-\delta_n}^{(i,-)},\cdot)-m_{\theta_n-\delta_n}}_{1}\\
&\leq
 2\costOg_{\max}^2 C_1\rho^{\tsimul_n},
\end{align*}
where in the last inequality we have used ergodicity (Assumption~\ref{as:mixing}).
We continue the computations of the bound on \eqref{eq:comp_trans}:
\begin{align*}
-2 \stepsize_n \mathbb E \brac{\mtermn  (\theta_n-\theta^*)}
&\leq 
\sqrt{2} \frac{\stepsize_n}{\delta_n}\xi_n^{1/2} \costOg_{\max} C_1^{1/2}\rho^{\tsimul_n/2}. 
\end{align*}
Choosing $T_n:=2\alpha\frac{\log n}{\log 1/\rho}$, we finally obtain that
\begin{equation*}
 -2 \stepsize_n \mathbb E \brac{\mtermn  (\theta_n-\theta^*)} \leq  \sqrt{2} \frac{\gamma_n}{\delta_n}\xi_n^{1/2} \costOg_{\max} C_1^{1/2} n^{-\alpha}. 
\end{equation*}

\item
To bound~\eqref{eq:comp_last}, we let
\begin{multline*}
 a_i := \costOg(\theta_n+\delta_n,X_{T_n+i \tsimul_n,T_n+(i+1)\tsimul_n}) \\
 -\costOg(\theta_n-\delta_n,X_{T_n+(K+i) \tsimul_n,T_n+(K+i+1)\tsimul_n})
\end{multline*}
to obtain
\begin{align*}
& \E\brac{\prt{\theta_{n+1}-\theta_n}^2} \\
 &=
 \frac{\stepsize_n^2}{4 \delta_n^2} \E\brac{\prt{\hat f_n(\theta_n+\delta_n)-\hat f_n(\theta_n-\delta_n)}^2}\\
 &=
 \frac{\stepsize_n^2}{4 \delta_n^2} \E\brac{\prt{\frac{1}{K}\sum_{i=0}^{K-1}a_i+\pen(\theta_n+\delta_n)-\pen(\theta_n-\delta_n) }^2}\\
 &\leq
 \frac{\stepsize_n^2}{2 \delta_n^2} \E\brac{\prt{\frac{1}{K}\sum_{i=0}^{K-1}a_i}^2+\prt{\pen(\theta_n+\delta_n)-\pen(\theta_n-\delta_n) }^2}\\
 &\leq
 \frac{\stepsize_n^2}{2K \delta_n^2} \E\brac{\sum_{i=0}^{K-1}a_i^2} + \frac{C_2\stepsize_n^2}{2} \E\brac{(\theta_n-\theta^*)^2}\\
 & \leq \frac{2\stepsize_n^2 \costOg_{\max}^2}{\delta_n^2} + \frac{C_2\stepsize_n^2}{2} \xi_n
\end{align*}
 \end{itemize}
where the second inequality follows by Assumption~\ref{as:reg}.c.


Finally, summing the previous terms, we obtain
 $$\xi_{n+1}\leq (1-q_n)\xi_n+ u_n \xi_{n}^{1/2}+v_n,$$ where
 $q_n:=\stepsize_n \prt{2\kappa-\frac{C_2\stepsize_n}{2}}$, $u_n=2C_0 \stepsize_n \delta_n^2+\sqrt{2}\costOg_{\max}C_1^{1/2} \frac{\stepsize_n}{\delta_n}n^{-\alpha}$ and $v_n=2\frac{\costOg_{\max}^2 \stepsize_n^2}{\delta_n^2}$.

We can already choose $\alpha$ such that $\frac{n^{-\alpha}}{\delta_n}\leq \delta_n^2$. In this case, $u_n\leq C_u \delta_n^2 \stepsize_n$ with $C_u:=2C_0+\sqrt{2}\costOg_{\max} C_1^{1/2}$.
Now, let us consider the following lemma from~\cite{Walton22}, which we state here for convenience.
{This lemma is purely algebraic and will let us identify the ``correct'' scaling for the input parameters of Algorithm~\ref{algo:kw}.}

\begin{lemma}[\cite{Walton22}]
\label{lem:limsup}
Let $\xi_n$ be a positive sequence, $A_n$ be a sequence, and $B_n,C_n$ be positive non-increasing sequences such that
\begin{equation*}
  \xi_{n+1}\leq \xi_n (1-A_n) + A_n B_n \xi_{n}^{1/2}+C_n A_n.
\end{equation*}
Then,
\begin{equation*}
 \limsup_{n \to \infty} \frac{\xi_n}{B_n^2+C_n} \leq \frac{1}{2}.
\end{equation*}
\end{lemma}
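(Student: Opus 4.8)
The statement to prove, Lemma~\ref{lem:limsup}, is a purely deterministic recursion estimate — the ``algebraic engine'' quoted from \cite{Walton22} that is plugged in at the very end of the proof of Theorem~\ref{th2}. My plan is a direct comparison argument carried out on the sequence $\xi_n$ itself; passing to $\sqrt{\xi_n}$ is tempting but, as one checks, throws away exactly the term that makes the bound sharp. Set $D_n:=B_n^2+C_n$, which is positive and non-increasing. Fix $\varepsilon\in(0,\tfrac12)$ and aim to show that $\xi_n\le(\tfrac12+\varepsilon)D_n$ for all $n$ large enough; letting $\varepsilon\downarrow0$ then gives the $\limsup$ bound.

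The steps, in order. (1) Handle the square-root term: either by Young's inequality, $A_nB_n\xi_n^{1/2}\le A_n\big(\eta\,\xi_n+\tfrac{B_n^2}{4\eta}\big)$, which turns the recursion into the affine form $\xi_{n+1}\le(1-(1-\eta)A_n)\xi_n+A_n\big(\tfrac{B_n^2}{4\eta}+C_n\big)$, or — to keep the constants sharp — by carrying $\sqrt{\xi_n}$ through the next step. (2) Run a ``moving-threshold'' induction with $R_n:=(\tfrac12+\varepsilon)D_n$: when $\xi_n>R_n$ the recursion contracts, pushing $\xi_n$ towards its instantaneous equilibrium, while once $\xi_n\le R_n$ the only source of escape is the slow decrease $R_n-R_{n+1}=(\tfrac12+\varepsilon)(D_n-D_{n+1})$ of the target; the crux is that this decrease is $o(A_nR_n)$ because $D_n$ is non-increasing and, under the hypotheses inherited from \cite{Walton22}, decays slowly relative to the accumulated contraction $\prod_k(1-A_k)$. (3) Make this quantitative by comparing $\xi_n$ with the explicit solution $\sum_k A_kE_k\prod_{j>k}(1-A_j)$ of the linear recursion (with $E_k\to D_k$) and using $\sum_nA_n=\infty$; conclude $\limsup_n\xi_n/D_n\le\tfrac12+\varepsilon$, then let $\varepsilon\downarrow0$.

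The main obstacle is precisely this balancing step: showing that the multiplicative decreases accumulated along the contraction outrun the slow shrinkage of the moving target $D_n$, so that the overshoot incurred each time $\xi_n$ re-enters the band from below becomes negligible in the limit. This is where $A_n\to0$, $\sum_nA_n=\infty$ and the monotonicity of $B_n,C_n$ are all used, and it is what pins down the constant $\tfrac12$ (rather than some larger value) in the \cite{Walton22} normalization. A clean way to organize it is to pass to the ratio $\zeta_n:=\xi_n/D_n$, derive $\zeta_{n+1}\le(1-A_n+\iota_n)\zeta_n+A_n\big(p_n\sqrt{\zeta_n}+q_n\big)$ with $\iota_n:=D_n/D_{n+1}-1=o(A_n)$, $p_n:=B_n/\sqrt{D_n}$ and $q_n:=C_n/D_n=1-p_n^2$, and then apply a Chung-type lemma to the recursion for $\zeta_n$.

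Finally, to close the proof of Theorem~\ref{th2}: apply Lemma~\ref{lem:limsup} with $A_n=q_n=\stepsize_n\big(2\kappa-\tfrac{C_2\stepsize_n}{2}\big)$, $B_n=u_n/q_n$ and $C_n=v_n/q_n$ — so that $A_nB_n=u_n$ and $A_nC_n=v_n$, both non-increasing thanks to the monotonicity in \eqref{eq:parameter2} — which gives $\limsup_n\xi_n/(B_n^2+C_n)\le\tfrac12$. Inserting the chosen input parameters ($\stepsize_n=n^{-1}$ and the stated $\delta_n$, together with $\tsimul_n\sim\tfrac{\alpha\log n}{\log(1/\rho)}$, $\alpha>1$, so that $\rho^{\tsimul_n}=O(n^{-\alpha})$) yields $q_n\sim2\kappa\stepsize_n$, $B_n\to\tfrac{C_u}{2\kappa}\delta_n^2$ with $C_u=2C_0+\sqrt2\,\costOg_{\max}C_1^{1/2}$, and $C_n\to\tfrac{\costOg_{\max}^2\stepsize_n}{\kappa\delta_n^2}$; a short computation then gives $B_n^2+C_n\sim\big(\tfrac{C_u^2}{4\kappa^2}+\tfrac{\costOg_{\max}^2}{\kappa}\big)n^{-2/3}$, whence $\limsup_n\E\big[(\theta_n-\theta^*)^2\big]n^{2/3}\le\tfrac{C_u^2}{8\kappa^2}+\tfrac{\costOg_{\max}^2}{2\kappa}$, which is the claim (the condition $\stepsize_0<\tfrac{4\kappa}{C_2}$ being needed only to guarantee $q_n>0$).
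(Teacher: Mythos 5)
First, a point of reference: the paper does not prove Lemma~\ref{lem:limsup} at all --- it is imported as a black box from \cite{Walton22} --- so your proposal has to be judged against the statement itself rather than against an argument in the text. Unfortunately the statement with the constant $\tfrac12$ cannot be proved, and your argument breaks exactly where one would expect. The instantaneous equilibrium of the recursion, obtained by solving $\xi=B_n\sqrt{\xi}+C_n$, is $\xi_n^*=\bigl(\tfrac{B_n+\sqrt{B_n^2+4C_n}}{2}\bigr)^2=\tfrac{B_n^2+2C_n+B_n\sqrt{B_n^2+4C_n}}{2}\ge B_n^2+C_n$, so your moving threshold $R_n=(\tfrac12+\varepsilon)(B_n^2+C_n)$ sits strictly \emph{below} $\xi_n^*$. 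Since the map $\xi\mapsto(1-A_n)\xi+A_nB_n\sqrt{\xi}+A_nC_n$ has positive drift for $\xi<\xi_n^*$, once $\xi_n\le R_n$ the recursion pushes the iterate \emph{up through} the threshold rather than trapping it below it; step (2) of your plan cannot close. This is not a presentational issue: taking $A_n\equiv\tfrac12$, $B_n\equiv1$, $C_n\equiv2$ (positive, non-increasing) and $\xi_n\equiv4$ satisfies the hypothesis with equality, yet $\xi_n/(B_n^2+C_n)=\tfrac43$ for all $n$. The best universal constant is $\sup_{B,C>0}\xi^*/(B^2+C)=\tfrac43$, attained at $C=2B^2$, not $\tfrac12$; and, as you yourself half-observe, the lemma as printed also silently needs $A_n\in(0,1)$, $A_n\to0$ and $\sum_nA_n=\infty$ (with $A_n\equiv0$ the hypothesis says nothing about the limit of $\xi_n$).

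What survives is essentially your step (1): Young's inequality $A_nB_n\xi_n^{1/2}\le A_n\bigl(\eta\xi_n+\tfrac{B_n^2}{4\eta}\bigr)$ with $\eta=\tfrac14$ turns the recursion into the affine one $\xi_{n+1}\le\bigl(1-\tfrac34A_n\bigr)\xi_n+\tfrac34A_n\cdot\tfrac43(B_n^2+C_n)$, and the comparison with $\sum_kA_kE_k\prod_{j>k}(1-A_j)$ from your step (3) --- which is sound once the missing hypotheses on $A_n$ are added and the driving term is non-increasing, as it is here --- yields $\limsup_n\xi_n/(B_n^2+C_n)\le\tfrac43$, which is sharp. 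Your closing paragraph, matching $A_n=q_n$, $B_n=u_n/q_n$, $C_n=v_n/q_n$ and extracting the $n^{-2/3}$ scaling, is exactly the bookkeeping the paper performs and is correct; replacing $\tfrac12$ by $\tfrac43$ leaves the $O(n^{-2/3})$ rate of Theorem~\ref{th2} intact and only multiplies the limiting constant by $\tfrac83$. So the substantive gap is the target constant and the threshold argument built around it, not the overall architecture.
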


Since $\stepsize_n < \frac{4 \kappa}{C_2}$, then $q_n$ is positive and Lemma~\ref{lem:limsup} gives the scaling:
 \begin{equation*}
  \limsup_{n \to \infty}  \frac{\xi_n}{B_n^2+C_n} \leq \frac{1}{2},
 \end{equation*}
 with $B_n=\frac{2C_u \delta_n^2}{4\kappa-C_2\stepsize_n}$ and $C_n=\frac{4\costOg_{\max}^2\stepsize_n}{\delta_n^2 \prt{4\kappa-C_2\stepsize_n}}$.
Therefore, to get comparable scalings, we choose the parameters such that $ \delta_n^4$ and $\stepsize_n \delta_n^{-2}$ are of the same order. A valid parameter choice is obtained when $\stepsize_n = n^{-1}$, $\delta_n = n^{-1/6}$ and $\alpha > 1/2$.
Within these parameters, we get
 \begin{equation}
 \label{eq:scale_conv}
 \limsup_{n \to \infty}\xi_n n^{2/3} \leq \frac{\prt{2C_0+\sqrt{2}\costOg_{\max}C_1^{1/2}}^2}{8\kappa^2}+\frac{\costOg_{\max}^2}{2\kappa}.
 \end{equation}
This concludes the proof.

\bibliographystyle{plain}

\begin{IEEEbiographynophoto}{Jonatha Anselmi}
is a tenured researcher at the French National Institute for Research in Digital Science and Technology (Inria), since 2014. Prior
to this, he was a full-time researcher at the Basque Center for Applied
Mathematics and a postdoctoral researcher at Inria. He received his PhD
in computer engineering at Politecnico di Milano (Italy) in 2009.
At the intersection of applied mathematics, computer science and engineering, his research interests focus on decision making under uncertainty, with particular emphasis on the development of highly-scalable algorithms that minimize congestion and operational costs of large-scale distributed systems.
\end{IEEEbiographynophoto}

\begin{IEEEbiographynophoto}{Bruno Gaujal}
is an Inria researcher.  Till Dec. 2015,  he has been  the head of the large-scale computing team in Inria Grenoble-Alpes. He has held several positions in AT\&T Bell Labs, Loria and École Normale Supérieure of Lyon. He obtained his PhD from University of Nice in 1994. He is a founding partner of a start-up company, RTaW, since 2007.  His main interests are in performance evaluation, optimization and control of large discrete event dynamic systems with applications to telecommunication and large computing infrastructures.
\end{IEEEbiographynophoto}

\begin{IEEEbiographynophoto}{Louis-Sebastien Rebuffi}
defended his PhD thesis in 2023 at Université Grenoble Alpes
under the supervision of Bruno Gaujal and Jonatha Anselmi.
His research interests focus on reinforcement learning algorithms applied to controlled queuing systems, viewing them as Markov decision processes.
\end{IEEEbiographynophoto}


\vfill

\end{document}